\documentclass[reqno]{llncs}

\usepackage{times,amsmath,amssymb,enumerate,gastex,rotating}

\pagestyle{plain}

\newcommand{\mcA}{\mathcal{A}}
\newcommand{\mcB}{\mathcal{B}}
\def\cl{\mathsf{cl}}
\newcommand{\Cmc}{\mathcal{C}}

\newcommand{\dom}{\mathop{\mathrm{dom}}}
\newcommand{\DSPACE}{\mathsf{DSPACE}}
\newcommand{\NSPACE}{\mathsf{NSPACE}}
\newcommand{\eval}{\mathsf{val}}
\newcommand{\EXPTIME}{\mathsf{EXPTIME}}
\newcommand{\FIM}{\mathsf{FIM}}
\newcommand{\first}{\mathsf{first}}
\newcommand{\FG}{\mathsf{FG}}
\newcommand{\IRR}{\mathsf{IRR}}
\newcommand{\lab}{\mathsf{label}}
\newcommand{\last}{\mathsf{last}}
\newcommand{\LOGCFL}{\mathsf{LOGCFL}}
\newcommand{\lop}{\mathsf{loop}}
\newcommand{\MT}{\mathsf{MT}}
\newcommand{\nodes}{\mathsf{nodes}}
\newcommand{\out}{\mathsf{out}}
\newcommand{\rest}{\mathord\restriction}

\newcommand{\Ptime}{\mathsf{P}}
\newcommand{\POLYLOGSPACE}{\mathsf{POLYLOGSPACE}}
\newcommand{\PSPACE}{\mathsf{PSPACE}}

\newcommand{\supp}{\mathsf{sup}}
\newcommand{\dA}{\mathbb{A}}
\newcommand{\dB}{\mathbb{B}}
\newcommand{\dC}{\mathbb{C}}

\begin{document}

\title{Compressed word problems for inverse monoids}
\author{Markus Lohrey} \institute{Universit\"at
Leipzig, Institut f\"ur Informatik, Germany\\
\email{lohrey@informatik.uni-leipzig.de}}

\maketitle

\begin{abstract}
The compressed word problem for a finitely generated monoid $M$ asks whether two given compressed
words over the generators of $M$ represent the same element of $M$. For string compression,
straight-line programs, i.e., context-free grammars that generate a single string, are used in this 
paper. It is shown that the compressed word problem for a free inverse monoid of finite rank at least
two is complete for $\Pi^p_2$ (second universal level of the polynomial time hierarchy). Moreover, 
it is shown that there exists a fixed finite idempotent presentation (i.e.,  a finite set of relations involving
idempotents of a free inverse monoid), for which the corresponding quotient monoid has a $\PSPACE$-complete
compressed word problem. The ordinary uncompressed word problem for such a quotient can be 
solved in logspace \cite{LoOn06IC}.  Finally, a $\PSPACE$-algorithm that checks whether a given element
of a free inverse monoid belongs to a given rational subset is presented.  This problem is also shown
to be $\PSPACE$-complete (even for a fixed finitely generated
submonoid instead of a variable rational subset). 
\end{abstract}

\section{Introduction}

The decidability and complexity of algorithmic problems
in (finitely generated) monoids and groups is a classical topic at the 
borderline of computer science and mathematics.
The most basic question of this kind is the {\em word problem},
which asks whether two words over the generators represent the 
same element.  Markov \cite{Mar47} and Post  \cite{Po47} proved independently that the word 
problem for finitely presented monoids is undecidable in 
general.  Later, Novikov  \cite{Nov58} and Boone \cite{Boo59} extended the result of Markov and 
Post to finitely presented groups, see the the survey \cite{MaMeSa95} for further information.

In this paper, we are interested in {\em inverse monoids}. A monoid is inverse, if for each
element $x$ there exists a unique ``inverse'' $x^{-1}$ such that
$x = x x^{-1} x$ and $x^{-1} = x^{-1}xx^{-1}$ \cite{Law99}. 
In the same way as groups can be 
represented by sets of permutations, inverse monoids
can be represented by sets of partial injections \cite{Law99}. 
Algorithmic questions for inverse monoids
received increasing attention in the past and inverse monoid theory
found several applications in combinatorial group theory, see e.g. 
\cite{BiMaMe94,Chof91,ChAl98,DeMeSe05,DiLoOn08,margolis:1993,MeSa96,LoOn06IC,Sil92,Sil96} and the survey \cite{MaMeSa95}.

Since the class of inverse monoids forms a variety of algebras 
(with respect to the operations of multiplication, inversion, and the identity
element), the free inverse monoid $\FIM(\Gamma)$ 
generated by a set $\Gamma$ exists.
Munn gave in \cite{munn:1974} an explicit representation of the 
free inverse monoid $\FIM(\Gamma)$. Elements can be represented
by finite subtrees of the Cayley-graph of the free group generated 
by $\Gamma$ (so called {\em Munn trees}). Moreover, there are two distinguished nodes (an initial
node and a final node). Multiplication of two elements of 
$\FIM(\Gamma)$ amounts of gluing the two Munn trees together, where
the final node of the first Munn tree is identified with the initial node of the 
second Munn tree. This gives rise to a very simple algorithm for the word
problem of $\FIM(\Gamma)$, which can moreover implemented in linear time.
In \cite{LoOn06IC}, it was also shown (using Munn trees together with a result of Lipton and Zalcstein
\cite{LiZa77} saying that the word problem for a finitely generated free group can be solved in 
logspace) that the word problem for  $\FIM(\Gamma)$ can be solved in 
logspace.

Although the word problem for a free inverse monoid can be solved very efficiently,
there are several subtle differences between the algorithmic properties of 
free inverse monoids on the one hand and free monoids and free groups on the 
other hand. Let us give two examples:
\begin{itemize}
\item Solvability of equations: By the seminal results of Makanin, this problem is decidable
for free monoids \cite{Mak77} 
and free groups \cite{Mak82}. 
On the other hand, solvability of equations
in a finitely generated free inverse monoid of rank at least 2 (the rank is the minimal number of generators)
is undecidable \cite{Rozenlat85}.
\item Rational subset membership problem: Membership in a given rational subset of a free monoid
or free group can be decided in polynomial time. The same problem is {\sf NP}-complete
for finitely generated free inverse monoids of rank at least two \cite{DiLoMi08}.
\end{itemize}
In this paper, we show that in a certain sense also the word problem is harder for free inverse
monoids than free monoids (groups). For this we consider the {\em compressed word problem},
where the input words are given succinctly by so called {\em straight-line programs} (SLPs)  \cite{PlRy99}.
An SLP is a context free grammar that generates  only one word,
see Section~\ref{S SLP}.
Since the length of this word may grow exponentially with the size (number of 
productions) of the SLP, SLPs can be seen as a compact string
representation. SLPs turned out to be a very
flexible compressed representation of strings, which are well suited for
studying algorithms for compressed strings; see e.g.
\cite{BeChRa08,GaKaPlRy96,Lif07,Loh06siam,Loh11IC,MiShTa97,Pla94,PlaRy98}.

In the compressed word problem for a finitely generated monoid $M$ the input
consists of two SLPs that generate words over the generators of $M$, and it is asked
whether these two words represent the same element of $M$.
Hence, the compressed word problem for a free monoid simply asks, whether
two SLPs generate the same word. Plandowski proved in \cite{Pla94} that this problem
can be solved in polynomial time; the best algorithm is due to Lifshits \cite{Lif07} and 
has a cubic running time. Based on Plandowski's result, it was shown in 
\cite{Loh06siam} that the compressed word problem for a free group can be solved
in polynomial time. This result has algorithmic implications for the ordinary (uncompressed)
word problem:  In  \cite{LoSchl07,Schl06} it was shown that the word problem for the automorphism 
group of a group $G$ can be reduced in polynomial time to the 
{\em compressed} word problem for $G$ (more general: the word problem for 
the endomorphism monoid of a monoid $M$ can be reduced in polynomial time to the 
{\em compressed} word problem for $M$).  Hence, the word problem for
the  automorphism group of a free group turned out to be solvable in polynomial time \cite{Schl06}, which solved
an open problem from combinatorial group theory \cite{KMSS03}. Generalizations of this result for larger classes of groups can 
be found in \cite{LoSchl07,Macd09}.

Our first main result states that the compressed word problem for every finitely generated free inverse monoid 
of rank at least two is complete for $\Pi^p_2$, the second universal level of the polynomial time 
hierarchy (Thm.~\ref{thm:cwp-pi2p}). The upper bound follows easily using Munn's solution for the word problem together with 
the above mentioned result of Lipton and Zalcstein for free groups.  The lower bound is based on a reduction 
from a variant of the SUBSETSUM problem together with an encoding of a SUBSETSUM instance by
an SLP \cite{Loh06siam}. Hence, the compressed word problem for free inverse monoids is indeed computationally harder
than the compressed word problem for free monoids (groups) (unless $\mathsf{P} = \Pi^p_2$).
It is not difficult to see that the compressed word problem for a  free inverse monoid 
of rank 1 can be solved in polynomial time (Prop.~\ref{prop:rank-one}).

In \cite{margolis:1993}, Margolis and Meakin presented a large
class of finitely presented inverse monoids with
decidable word problems. An inverse monoid from
that class is of the form $\FIM(\Gamma)/P$, where $P$ is a presentation consisting
of a finite number of relations $e=f$, where $e$ and $f$
are idempotents of $\FIM(\Gamma)$; we call such a 
presentation idempotent. 
In fact, in \cite{margolis:1993} it is shown that even the uniform word
problem for idempotent presentations is decidable. In this 
problem, also the presentation is part of the input.
An alternative proof for the decidability result of Margolis and Meakin
was given in \cite{Sil92}.
In \cite{LoOn06IC} it was shown that the word problem for every inverse monoid 
$\FIM(\Gamma)/P$, where $P$ is an idempotent presentation, can be solved
in logspace. This implies that the compressed word problem for each of 
these inverse monoids belongs to the class $\PSPACE$. Our second main result states
that the are specific idempotent presentations $P$ such that the compressed word
problem for $\FIM(\Gamma)/P$ is $\PSPACE$-complete (Thm.~\ref{thm:PSPACE-idempot}).

In the last part of the paper we consider the compressed variant of the rational subset
membership problem. The class of rational subsets of a monoid $M$ is the smallest
class of subsets, which contains all finite subsets, and which is closed under union, product
and Kleene star ($A^*$ is the submonoid generated by the subset $A \subseteq M$). 
If $M$ is finitely generated by $\Gamma$, then a rational subset of $M$ can be represented
by a finite automaton over the alphabet $\Gamma$. In this case, the rational subset membership problem
asks, whether a given element of $M$ (given by a finite word over $\Gamma$) belongs to a given
rational subset (given by a finite automaton over $\Gamma$). Especially for groups, this
problem is intensively studied, see e.g. \cite{LohSt10,LohSt09tocs}. In \cite{DiLoMi08}, it was shown that the rational
subset membership problem for a free inverse monoid of finite rank at least two is {\sf NP}-complete.
Here, we consider the {\em compressed rational subset membership problem}, where the input
consists of an SLP-compressed word over the generators and a finite automaton over the generators.
We show that the compressed rational
subset membership problem for a free inverse monoid of finite rank at least two is {\sf PSPACE}-complete.
The difficult part of the proof is to show membership in $\PSPACE$.  
$\PSPACE$-hardness holds already for the case that the rational subset is a fixed
finitely generated submonoid (Thm.~\ref{PSPACE-hard submonoid}).

\section{Preliminaries}

Let $\Gamma$ be a finite alphabet.
The \emph{empty word} over $\Gamma$ is denoted by $\varepsilon$.
Let $s = a_1 \cdots a_n \in \Gamma^*$ be a word over $\Gamma$,
where $n\geq 0$ and $a_1, \ldots, a_n \in \Gamma$ for $1 \leq i \leq n$.
The \emph{length} of $s$ is $|s| = n$.
For $1 \leq i \leq n$ let $s[i] = a_i$ and
for $1 \leq i \leq j \leq n$ let $s[i,j] = a_i a_{i+1} \cdots a_j$.
If $i > j$ we set $s[i,j] = \varepsilon$.
For $n \in \mathbb{N}$ let 
$\Gamma^{\leq n} = \{w \in \Gamma^* \mid |w| \leq n\}$.
We write $s \preceq  t$ for $s,t \in \Gamma^*$, if $s$ is 
a prefix of $t$. A set $A \subseteq \Gamma^*$ is {\em prefix-closed},
if $u \preceq v \in A$ implies $u \in A$.
We denote with $\Gamma^{-1} = \{ a^{-1} \mid 
a \in \Gamma\}$ a disjoint copy of the finite alphabet $\Gamma$. 
For $a^{-1} \in \Gamma^{-1}$ we define
$(a^{-1})^{-1}=a$; thus, $^{-1}$ becomes an involution on the alphabet $\Gamma\cup\Gamma^{-1}$.
We extend this involution to words from $(\Gamma\cup\Gamma^{-1})^*$ by setting 
$(a_1  \cdots a_n)^{-1} = a_n^{-1} \cdots a_1^{-1}$, where 
$a_i \in \Gamma\cup\Gamma^{-1}$.
For $a \in \Gamma\cup\Gamma^{-1}$ 
and $n \geq 0$ we use $a^{-n}$ as an abbreviation
for the word $(a^{-1})^n$.

We use standard terminology from automata theory.
A {\em nondeterministic finite automaton} (NFA) over an input alphabet $\Gamma$ is a tuple
$\mcA = (Q, \Sigma, \delta, q_0, F)$, where $Q$ is the set of states,
$\delta \subseteq Q \times \Sigma \times Q$ is the 
transition relation, $q_0\in Q$ is the initial state, and $F \subseteq Q$
is the set of final states. For a {\em deterministic finite automaton}, 
$\delta : Q \times \Sigma \to_p Q$ is a partial mapping from
$Q \times \Sigma$ to $Q$. 

\paragraph{\bf Complexity theory:}
We assume some basic background
in complexity theory, see e.g. \cite{Papa94}. Recall that
$\Pi^p_2$ (the second universal level of the polynomial time 
hierarchy) is the class of all languages $L$ for which there
exists a polynomial time predicate $P(x,y,z)$ and a polynomial $p(n)$
such that
$$
L = \{ x \in \Sigma^* \mid \forall y \in \Sigma^{\leq p(|x|)} \exists z \in
\Sigma^{\leq p(|x|)} : P(x,y,z) \}.
$$
$\POLYLOGSPACE$ 
denotes the class $\NSPACE(\log(n)^{O(1)}) = \DSPACE(\log(n)^{O(1)})$.
A $\PSPACE$-transducer is a deterministic Turing machine with a 
read-only input tape, a write-only output tape 
and a  working tape, whose length is bounded by $n^{O(1)}$,
where $n$ is the input length. The output is written
from left to right on the output tape, i.e., 
in each step the transducer either outputs a new symbol
on the output tape, in which case the output head moves
one cell to the right, or the transducer does not output
a new symbol in which case the output head does not move.
Moreover, we assume that the transducer terminates for every
input. This implies that a $\PSPACE$-transducer computes a 
mapping $f : \Sigma^* \to \Theta^*$, where $|f(w)|$ is bounded
by $2^{|w|^{O(1)}}$. A $\POLYLOGSPACE$-transducer is defined
in the same way as a $\PSPACE$-transducer, except that 
the length of the working tape is bounded by $\log(n)^{O(1)}$.
The proof of the following lemma uses the same idea that shows that 
logspace reducibility is transitive.

\begin{lemma} \label{PSPACE}
Assume that $f : \Sigma^* \to \Theta^*$ 
can be computed by a $\PSPACE$-transducer and 
that  $g : \Theta^* \to \Delta^*$ can be computed
by a $\POLYLOGSPACE$-transducer. 
Then the mapping  $f \circ g$ can be computed by a 
$\PSPACE$-transducer.
In particular, if the language 
$L \subseteq \Theta^*$ belongs to $\POLYLOGSPACE$, then
$f^{-1}(L)$ belongs to $\PSPACE$.
\end{lemma}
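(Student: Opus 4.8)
The plan is to construct a single $\PSPACE$-transducer $h$ computing the composition, i.e.\ the map $w \mapsto g(f(w))$ (which is what the notation $f \circ g$ in the statement denotes), by simulating the $\POLYLOGSPACE$-transducer $g$ while producing the symbols of its input string $f(w)$ \emph{on demand}, rather than ever storing $f(w)$ explicitly. This detour is forced: since $f$ is a $\PSPACE$-transducer, the length $N = |f(w)|$ may be as large as $2^{|w|^{O(1)}}$, so writing $f(w)$ down is impossible within polynomial space. The key observation that makes the simulation feasible is that $g$, run on an input of length $N \leq 2^{|w|^{O(1)}}$, uses only $\log(N)^{O(1)} = |w|^{O(1)}$ cells on its working tape --- so its entire configuration is polynomial in $|w|$, provided we never keep a full copy of $f(w)$. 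This is precisely the ``recompute the intermediate string on demand'' argument behind transitivity of logspace reducibility, adapted one level up.

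Concretely, $h$ maintains the finite control of $g$, its working tape of length $\log(N)^{O(1)} = |w|^{O(1)}$, and the position $i$ of $g$'s two-way read-only input head stored in binary; since $0 \leq i \leq N+1$, this pointer needs only $\lceil \log(N+2) \rceil = |w|^{O(1)}$ bits. Whenever the simulation of $g$ needs the symbol at position $i$ of its input, namely $f(w)[i]$, the machine $h$ launches a fresh run of the $f$-transducer on $w$, equipped with a counter tallying the output symbols that $f$ emits; it discards those symbols but records the $i$-th one, and detects an end-of-input marker if $f$ halts before emitting $i$ symbols. The $f$-transducer uses $|w|^{O(1)}$ working space, and the counter ranges over $[0,N]$ and hence needs $|w|^{O(1)}$ bits. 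Whenever the simulated $g$ emits an output symbol, $h$ emits the same symbol, so the left-to-right output discipline is inherited directly from $g$; and $h$ halts because both $f$ and $g$ halt. Summing the contributions --- $g$'s control and working tape, the input-head pointer, and the scratch space for one symbol-extraction from $f$ --- every quantity is bounded by $|w|^{O(1)}$, so $h$ is a $\PSPACE$-transducer.

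The main obstacle, and the only non-routine point, is the space accounting: one must verify that \emph{both} the polylog working space of $g$ \emph{and} the head/counter bookkeeping collapse to polynomial space once the bound $N \leq 2^{|w|^{O(1)}}$ is substituted into $\log(N)^{O(1)}$. Everything else --- simulating one transducer inside another, counting emitted symbols, and restarting the extraction of $f(w)[i]$ from scratch for each query so that its scratch space is reused --- is mechanical and introduces no further space.

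For the ``in particular'' claim, a language $L \in \POLYLOGSPACE$ is decided by a $\POLYLOGSPACE$-machine, which we view as a $\POLYLOGSPACE$-transducer $g \colon \Theta^* \to \{0,1\}^*$ with $g(v) = 1$ iff $v \in L$. Then $f^{-1}(L) = \{ w \mid g(f(w)) = 1 \}$, and by the first part the map $w \mapsto g(f(w))$ is computed by a $\PSPACE$-transducer; reading off its single output bit yields a $\PSPACE$ decision procedure for $f^{-1}(L)$, whence $f^{-1}(L) \in \PSPACE$.
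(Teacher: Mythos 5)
Your proposal is correct and follows essentially the same route as the paper's proof: simulate the $\POLYLOGSPACE$-transducer for $g$ on the virtual input $f(w)$, keeping only a polynomial-size pointer and regenerating the needed symbol of $f(w)$ by rerunning the $\PSPACE$-transducer for $f$, with the key space bound $\log(2^{|w|^{O(1)}})^{O(1)} = |w|^{O(1)}$. The ``in particular'' part is also handled exactly as in the paper, via the characteristic function of $L$ viewed as a $\POLYLOGSPACE$-transducer.
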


\begin{proof}
The proof uses the same idea that shows that the composition
of two logspace computable mappings is again logspace computable.
Let $w \in \Sigma^*$ be an input.
Basically, we run the $\POLYLOGSPACE$-transducer for $g$
on the input $f(w)$. But since $f$ can be computed by a 
$\PSPACE$-transducer (which can generate an exponentially long 
output) the length of $f(w)$ can be only bounded by $2^{|w|^{O(1)}}$.
Hence, we cannot construct $f(w)$ explicitly. But this is not 
necessary. We only store  a pointer to some position in $f(w)$ 
(this pointer needs space $|w|^{O(1)}$) while
running the $\POLYLOGSPACE$-transducer for $g$. Each 
time, this algorithm needs the $i^{th}$ letter of $f(w)$, we 
run the $\PSPACE$-transducer for $L$ until the $i^{th}$ output symbol
is generated. Note that the $\POLYLOGSPACE$-transducer for $g$
needs space $\log(2^{|w|^{O(1)}})^{O(1)} = |w|^{O(1)}$ while running on 
$f(w)$. Hence, the total space requirement is bounded by $|w|^{O(1)}$.

The second statement of the lemma follows indeed from the first statement,
by taking the $\POLYLOGSPACE$-transducer $g = \chi_L : \Theta^* \to \{0,1\}$
(the characteristic function of $L$).
\qed
\end{proof}

\section{Free groups}

It is common to identify a congruence $\alpha$ on a monoid $M$
with the surjective homomorphism from $M$ to the quotient $M/\alpha$
that maps an element $m \in M$ to the congruence class of $m$ with respect to
$\alpha$.
The \emph{free group} $\FG(\Gamma)$ generated by the set $\Gamma$ is 
the quotient monoid 
\begin{equation}
\label{def-free-group}
\FG(\Gamma) = (\Gamma\cup\Gamma^{-1})^*/\delta,
\end{equation}
where $\delta$ is the smallest congruence on $(\Gamma\cup\Gamma^{-1})^*$ that contains 
all pairs $(bb^{-1}, \varepsilon)$ for $b \in \Gamma\cup\Gamma^{-1}$.
It is well known that for 
every $u \in (\Gamma\cup\Gamma^{-1})^*$ there exists a unique word
$r(u)\in (\Gamma\cup\Gamma^{-1})^*$ (the \emph{reduced normal form of $u$})
such that $\delta(u) = \delta(r(u))$ and $r(u)$ does not contain
a factor of the form $bb^{-1}$ for $b \in \Gamma\cup\Gamma^{-1}$. 
It holds $\delta(u)=\delta(v)$ if and only if $r(u)=r(v)$. 
Since the word $r(u)$ can be calculated from $u$ in linear time \cite{Book1982},
the word problem for $\FG(\Gamma)$ can be solved in linear time.
Let $\IRR(\Gamma) = \{ r(u) \mid u \in (\Gamma\cup\Gamma^{-1})^* \}$
be the set of all {\em irreducible} words.
The epimorphism $\delta :  (\Gamma\cup\Gamma^{-1})^* \to \FG(\Gamma)$
restricted to $\IRR(\Gamma)$ is a bijection.

The Cayley-graph of $\FG(\Gamma)$  with respect to the standard
generating set $\Gamma\cup\Gamma^{-1}$ will be denoted by $\Cmc(\Gamma)$.
Its vertex set is $\FG(\Gamma)$ and there is an $a$-labeled edge ($a \in \Gamma\cup\Gamma^{-1}$)
from $x\in\FG(\Gamma)$ to $y\in\FG(\Gamma)$  if $y = xa$ in $\FG(\Gamma)$.
Note that $\FG(\Gamma)$ 
is a finitely-branching tree. Figure~\ref{F1} shows a finite portion
of $\Cmc(\{a,b\})$. Here, and in the following, we 
only draw one directed edge between two points. Thus, for every drawn $a$-labeled 
edge we omit the $a^{-1}$-labeled reversed edge.
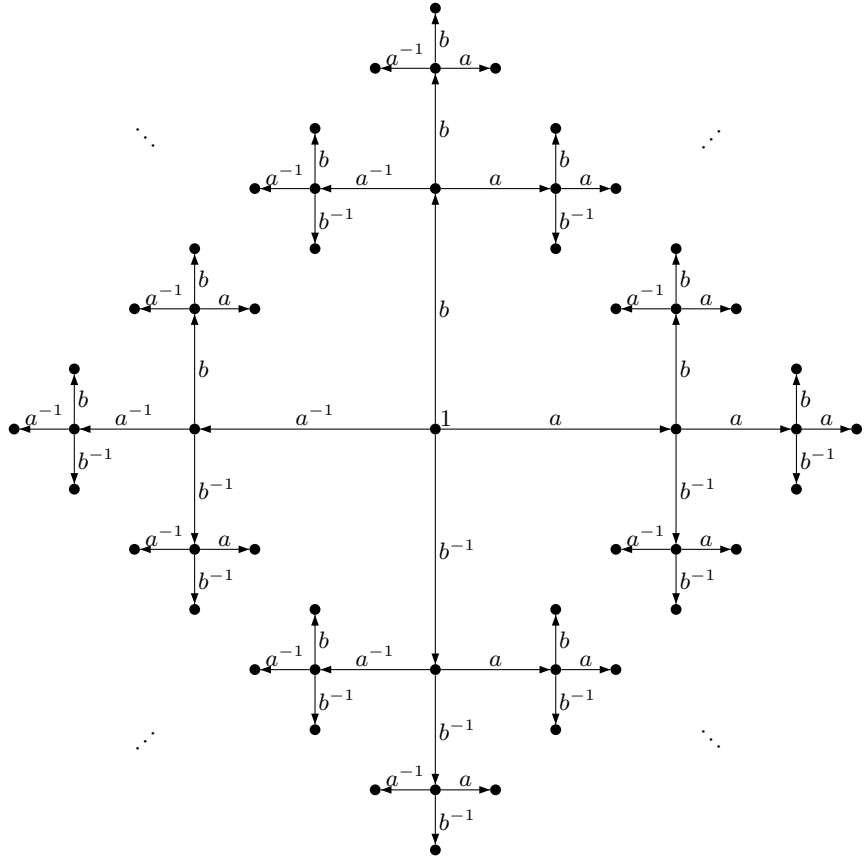
\begin{figure}[t]
  \begin{center} 
  \setlength{\unitlength}{1.6mm} 
  \begin{picture}(70,70)(5,5) 
    \gasset{Nframe=n,AHnb=1,ELdist=0.3} 
    \gasset{Nw=.9,Nh=.9,Nfill=y}

    \node[ExtNL=y,NLangle=45](1)(40,40){$1$}
    \put(15,65){\begin{turn}{-45}$\ldots$\end{turn}}
    \put(62,65){\begin{turn}{225}$\ldots$\end{turn}}
    \put(15,15){\begin{turn}{225}$\ldots$\end{turn}}
    \put(62,15){\begin{turn}{-45}$\ldots$\end{turn}}

    \node(-a)(20,40){}
    \node(a)(60,40){}
    \node(b)(40,60){}
    \node(-b)(40,20){}
    \drawedge(1,a){$a$}
    \drawedge[ELside=r](1,-a){$a^{-1}$}
    \drawedge(1,-b){$b^{-1}$}
    \drawedge[ELside=r](1,b){$b$}  

    \node(-aa)(10,40){}
    \node(-ab)(20,50){}
    \node(-a-b)(20,30){}
    \drawedge[ELside=r](-a,-aa){$a^{-1}$}
    \drawedge(-a,-a-b){$b^{-1}$}
    \drawedge[ELside=r](-a,-ab){$b$} 

    \node(aa)(70,40){}
    \node(ab)(60,50){}
    \node(a-b)(60,30){}
    \drawedge(a,aa){$a$}
    \drawedge(a,a-b){$b^{-1}$}
    \drawedge[ELside=r](a,ab){$b$}

    \node(b-a)(30,60){}
    \node(bb)(40,70){}
    \node(ba)(50,60){}
    \drawedge(b,ba){$a$}
    \drawedge[ELside=r](b,b-a){$a^{-1}$}
    \drawedge[ELside=r](b,bb){$b$}  

    \node(-b-a)(30,20){}
    \node(-bb)(40,10){}
    \node(-ba)(50,20){}
    \drawedge(-b,-ba){$a$}
    \drawedge[ELside=r](-b,-b-a){$a^{-1}$}
    \drawedge(-b,-bb){$b^{-1}$}

    \node(-aaa)(5,40){}
    \node(-aab)(10,45){}
    \node(-aa-b)(10,35){}
    \drawedge[ELside=r](-aa,-aaa){$a^{-1}$}
    \drawedge(-aa,-aa-b){$b^{-1}$}
    \drawedge[ELside=r](-aa,-aab){$b$} 

    \node(-ab-a)(15,50){}
    \node(-abb)(20,55){}
    \node(-aba)(25,50){}
    \drawedge(-ab,-aba){$a$}
    \drawedge[ELside=r](-ab,-ab-a){$a^{-1}$}
    \drawedge[ELside=r](-ab,-abb){$b$}  

    \node(-a-b-a)(15,30){}
    \node(-a-bb)(20,25){}
    \node(-a-ba)(25,30){}
    \drawedge(-a-b,-a-ba){$a$}
    \drawedge[ELside=r](-a-b,-a-b-a){$a^{-1}$}
    \drawedge(-a-b,-a-bb){$b^{-1}$}

    \node(b-aa)(25,60){}
    \node(b-ab)(30,65){}
    \node(b-a-b)(30,55){}
    \drawedge[ELside=r](b-a,b-aa){$a^{-1}$}
    \drawedge(b-a,b-a-b){$b^{-1}$}
    \drawedge[ELside=r](b-a,b-ab){$b$} 

    \node(baa)(55,60){}
    \node(bab)(50,65){}
    \node(ba-b)(50,55){}
    \drawedge(ba,baa){$a$}
    \drawedge(ba,ba-b){$b^{-1}$}
    \drawedge[ELside=r](ba,bab){$b$}

    \node(bb-a)(35,70){}
    \node(bbb)(40,75){}
    \node(bba)(45,70){}
    \drawedge(bb,bba){$a$}
    \drawedge[ELside=r](bb,bb-a){$a^{-1}$}
    \drawedge[ELside=r](bb,bbb){$b$}

    \node(aaa)(75,40){}
    \node(aab)(70,45){}
    \node(aa-b)(70,35){}
    \drawedge(aa,aaa){$a$}
    \drawedge(aa,aa-b){$b^{-1}$}
    \drawedge[ELside=r](aa,aab){$b$}  

    \node(ab-a)(55,50){}
    \node(abb)(60,55){}
    \node(aba)(65,50){}
    \drawedge(ab,aba){$a$}
    \drawedge[ELside=r](ab,ab-a){$a^{-1}$}
    \drawedge[ELside=r](ab,abb){$b$}  

    \node(a-b-a)(55,30){}
    \node(a-bb)(60,25){}
    \node(a-ba)(65,30){}
    \drawedge(a-b,a-ba){$a$}
    \drawedge[ELside=r](a-b,a-b-a){$a^{-1}$}
    \drawedge(a-b,a-bb){$b^{-1}$}

    \node(-b-aa)(25,20){}
    \node(-b-ab)(30,25){}
    \node(-b-a-b)(30,15){}
    \drawedge[ELside=r](-b-a,-b-aa){$a^{-1}$}
    \drawedge(-b-a,-b-a-b){$b^{-1}$}
    \drawedge[ELside=r](-b-a,-b-ab){$b$} 

    \node(-baa)(55,20){}
    \node(-bab)(50,25){}
    \node(-ba-b)(50,15){}
    \drawedge(-ba,-baa){$a$}
    \drawedge(-ba,-ba-b){$b^{-1}$}
    \drawedge[ELside=r](-ba,-bab){$b$}  

    \node(-bb-a)(35,10){}
    \node(-bbb)(40,5){}
    \node(-bba)(45,10){}
    \drawedge(-bb,-bba){$a$}
    \drawedge[ELside=r](-bb,-bb-a){$a^{-1}$}
    \drawedge(-bb,-bbb){$b^{-1}$}
  \end{picture}
  \end{center}
  \caption{The Cayley-graph $\Cmc(\{a,b\})$ of the free group $\FG(\{a,b\})$}
  \label{F1}
\end{figure}

\section{Inverse monoids}\label{S inverse}

A monoid $M$ is called an \emph{inverse monoid} 
if for every $m \in M$ there is a
{\em unique} $m^{-1} \in M$ such
that $m = mm^{-1}m$ and $m^{-1} = m^{-1}mm^{-1}$. 
For detailed reference on inverse monoids see 
\cite{Law99}; here we only recall the basic notions.
Since the class of inverse monoids forms a variety of algebras 
(with respect to the operations of multiplication, inversion, and the identity
element), the free inverse monoid $\FIM(\Gamma)$ 
generated by a set $\Gamma$ exists.
Vagner gave an explicit presentation of $\FIM(\Gamma)$: 
Let $\rho$ be the smallest congruence on the 
free monoid $(\Gamma \cup \Gamma^{-1})^*$ which
contains for all words $v,w \in (\Gamma \cup \Gamma^{-1})^*$ the pairs 
$(w, ww^{-1}w)$ and $(ww^{-1}vv^{-1},vv^{-1}ww^{-1})$; these
identities are also called Vagner equations. Then
$\FIM(\Gamma) \simeq (\Gamma \cup \Gamma^{-1})^*/\rho$.
An element $x$ of an inverse monoid $M$ is idempotent (i.e., $x^2=x$) if and only
if $x$ is of the form $mm^{-1}$ for some $m \in M$. Hence, Vagner's
presentation of $\FIM(\Gamma)$ implies that idempotent elements in an inverse monoid commute. 
Since the Vagner equations are true in the free group $\FG(\Gamma)$, 
there exists a congruence $\gamma$ on $\FIM(\Gamma)$ such that
$\FG(\Gamma) = \FIM(\Gamma)/\gamma$. When viewing congruences
as homomorphisms, we have $\delta = \rho \circ \gamma$, where
$\delta$ is the congruence on $(\Gamma \cup \Gamma^{-1})^*$ 
from \eqref{def-free-group}.

Elements of  $\FIM(\Gamma)$ can be also represented via \emph{Munn trees}:
The Munn tree $\MT(u)$ of $u \in (\Gamma \cup \Gamma^{-1})^*$ is a finite and prefix-closed
subset of $\IRR(\Gamma)$; it is defined by
\[  \MT(u) = \{  r(v) \mid v \preceq u  \}. \]
By identifying an irreducible word $v \in \IRR(\Gamma)$ with the group element
$\delta(v)$, $\MT(u)$ becomes the set of all nodes along the unique path in $C(\Gamma)$ 
that starts in $1$ and that is labeled with the word $u$. 
The subgraph of the Cayley-graph $C(\Gamma)$, which is induced 
by $\MT(u)$ is connected. Hence it is a finite tree and we can identify
$\MT(u)$ with this tree. The following result is known as Munn's Theorem:

\begin{theorem}[\protect{\cite{munn:1974}}] \label{munn}
For all $u,v \in (\Gamma \cup \Gamma^{-1})^*$, we have:
$\rho(u) = \rho(v)$ if and only if 
($r(u) = r(v)$ and $\MT(u) = \MT(v))$.
\end{theorem}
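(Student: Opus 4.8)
The statement is an ``if and only if'', and the two directions have very different flavours. For both I would work with the map $\Phi(u)=(\MT(u),r(u))$ and the \emph{gluing formula}
\[
\MT(uv)=\MT(u)\cup r(u)\,\MT(v), \qquad r(uv)=r\big(r(u)r(v)\big),
\]
where $r(u)\,\MT(v)=\{\,r(r(u)z)\mid z\in\MT(v)\,\}$ is the translate of $\MT(v)$ by the group element $r(u)$; this is immediate from the description of $\MT(u)$ as the set of vertices visited by the $u$-labelled path in $\Cmc(\Gamma)$ starting at $1$. Iterating it gives, for words $x,s,y$, that both $\MT(xsy)$ and $r(xsy)$ depend on $s$ only through $(\MT(s),r(s))$. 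The easy direction ($\Rightarrow$) then follows once I check that the two generating pairs of $\rho$ already satisfy $\MT(s)=\MT(t)$ and $r(s)=r(t)$: for $(w,ww^{-1}w)$ both sides have reduced form $r(w)$ and Munn tree $\MT(w)$, and for $(ww^{-1}vv^{-1},vv^{-1}ww^{-1})$ both sides reduce to $\varepsilon$ and have Munn tree $\MT(w)\cup\MT(v)$. Since $\rho$ is the congruence generated by these pairs, $\Phi$ is constant on $\rho$-classes, which is exactly ($\Rightarrow$).

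The real content is ($\Leftarrow$): I must show that $(\MT(u),r(u))$ \emph{determines} the $\rho$-class. The plan is to produce a canonical word depending only on this pair. Fix a linear order on $\IRR(\Gamma)$ and, for a word $u$, set $P(u)=\prod_{x\in\MT(u)}xx^{-1}$, the factors listed in increasing order (the factor for $x=\varepsilon$ being empty); note $P(u)$ is idempotent, $r(P(u))=\varepsilon$, $\MT(P(u))=\MT(u)$, and $P(u)$ is literally a function of the vertex set $\MT(u)$. The key is the \emph{normal-form lemma}: for every $u$ one has $\rho(u)=\rho\big(P(u)\,r(u)\big)$. Granting this, ($\Leftarrow$) is immediate, since $r(u)=r(v)$ and $\MT(u)=\MT(v)$ make $P(u)\,r(u)$ and $P(v)\,r(v)$ the \emph{same} word, whence $\rho(u)=\rho(P(u)r(u))=\rho(P(v)r(v))=\rho(v)$.

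I would prove the normal-form lemma by induction on $|u|$, writing $u=u'a$ and invoking the inductive hypothesis $\rho(u')=\rho(P(u')\,r(u'))$; throughout I may use any identity valid in an inverse monoid (idempotents commute, $e^2=e$, $ww^{-1}w=w$, and $we=wew^{-1}w$ for idempotent $e$), since $\FIM(\Gamma)$ is inverse. In the non-cancelling case $r(u)=r(u')a$ and $\MT(u)=\MT(u')\cup\{r(u)\}$: moving the new idempotent factor $r(u)r(u)^{-1}$ to the end (idempotents commute) and applying $r(u)r(u)^{-1}r(u)=r(u)$ turns $P(u)\,r(u)$ into $P(u')\,r(u)=P(u')\,r(u')a$, which is $\rho$-equivalent to $u$. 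The main obstacle is the cancelling case, where $r(u')=wa^{-1}$, $r(u)=w$, and $\MT(u)=\MT(u')$, so $P(u)=P(u')$ and I must show $\rho\big(P(u')\,wa^{-1}a\big)=\rho\big(P(u')\,w\big)$. Here I would apply the identity $we=wew^{-1}w$ with $e=a^{-1}a$ to get $wa^{-1}a=\big(wa^{-1}a\,w^{-1}\big)w$, and then observe that the idempotent $wa^{-1}a\,w^{-1}=(wa^{-1})(wa^{-1})^{-1}$ is exactly the factor $xx^{-1}$ for the vertex $x=wa^{-1}=r(u')\in\MT(u')$ occurring in $P(u')$; absorbing it (commuting idempotents together with $e^2=e$) leaves $P(u')$ unchanged modulo $\rho$, yielding $\rho(P(u')\,wa^{-1}a)=\rho(P(u')\,w)$ as required. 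The delicate point to get right is precisely this absorption step — it is what forces the tree to be prefix-closed (so that every cancelled vertex already carries its geodesic idempotent inside $P(u')$) and makes the canonical fixed-order word legitimate.
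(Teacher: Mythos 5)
The paper does not prove this statement at all: it is Munn's theorem, quoted with a citation to \cite{munn:1974}, so there is no in-paper argument to compare yours against. Your proof is correct and is essentially the standard normal-form proof of Munn's theorem. The forward direction is handled properly: the gluing identities $\MT(uv)=\MT(u)\cup r(u)\MT(v)$ and $r(uv)=r(r(u)r(v))$ show that $(\MT(\cdot),r(\cdot))$ of a word depends on any designated factor only through the pair for that factor, and both families of Vagner relations preserve the pair, so the pair is a $\rho$-invariant. For the converse, your normal form $P(u)\,r(u)$ with $P(u)=\prod_{x\in\MT(u)}xx^{-1}$ in a fixed order is a function of $(\MT(u),r(u))$ alone, and the induction on $|u|$ goes through: in the non-cancelling step the new factor $r(u)r(u)^{-1}$ is moved to the end and absorbed by $ee\,r(u)\mapsto r(u)$ via $xx^{-1}x=x$; in the cancelling step the identity $we=(wew^{-1})w$ for idempotent $e=a^{-1}a$ (valid since idempotents commute and $w=ww^{-1}w$) rewrites $wa^{-1}a$ as $(wa^{-1})(wa^{-1})^{-1}w$, and the idempotent $(wa^{-1})(wa^{-1})^{-1}$ is already a factor of $P(u')$ because $wa^{-1}=r(u')\in\MT(u')$, so it is absorbed using commutativity of idempotents and $e^2=e$. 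You also correctly use prefix-closedness to see $\MT(u)=\MT(u')$ in the cancelling case. One presentational caveat: you freely invoke that $(\Gamma\cup\Gamma^{-1})^*/\rho$ is an inverse monoid in which idempotents commute; the paper asserts this as a consequence of Vagner's presentation, but strictly speaking it is itself a nontrivial fact (Vagner's theorem) that your argument takes as input rather than proves.
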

Thus, $\rho(u)\in\FIM(\Gamma)$ can be uniquely represented by the pair
$(\MT(u), r(u))$.
In fact, if we define 
on the set of all pairs $(U,v) \in 2^{\IRR(\Gamma)} \times \IRR(\Gamma)$
(with $v \in U$ and $U$ finite and prefix-closed)
a multiplication by
$(U,v)(V,w) = (r(U \cup vV), r(vw))$, 
then the resulting monoid is isomorphic to $\FIM(\Gamma)$.

Quite often, we will represent an element $\rho(u) \in \FIM(\Gamma)$ 
by a diagram for its Munn tree, where in addition 
 the node $\varepsilon $ is represented by a bigger circle and the node $r(u)$ is marked
by an outgoing arrow. If $r(u) = \varepsilon$, then we omit this arrow. By Thm.~\ref{munn}
such a diagram uniquely specifies an element of $\FIM(\Gamma)$.

\begin{example}
The diagram for $\rho(b b^{-1} a b b^{-1} a) \in \FIM(\{a,b\})$ looks
as follows:
\begin{center}
  \setlength{\unitlength}{1mm}
  \begin{picture}(20,14)(0,-2)
    \gasset{Nadjust=n,ExtNL=y,NLdist=1,Nw=1,Nh=1,AHnb=1,Nfill=y,ELdist=0.5,AHLength=2,AHlength=2,linewidth=.3,Nfill=y}
    \node(b)(0,10){}
    \node[Nw=2,Nh=2,Nfill=n](0)(0,0){}
    \node(a)(10,0){}
    \node(ab)(10,10){}
    \node(aa)(20,0){}
    \drawline(20,0)(23.5,0)
    \drawedge(a,ab){$b$}
    \drawedge(0,a){$a$}
    \drawedge(a,aa){$a$}
    \drawedge(0,b){$b$}
 \end{picture}
\end{center}
\end{example}
Thm.~\ref{munn} leads to a polynomial time algorithm for the word problem for
$\FIM(\Gamma)$. For instance, the reader can easily check that 
$b b^{-1} a b b^{-1} a = a a a^{-1} b b^{-1} a^{-1} b b^{-1} a a$ 
in $\FIM(\{a,b\})$ by using Munn's Theorem. In fact, every word that labels
a path from $\varepsilon$ to $aa$  (the node with the outgoing arrow) and that visits
all nodes of the above diagram represents the same element of $\FIM(\{a,b\})$
as $b b^{-1} a b b^{-1} a$.
Munn's theorem also implies that an element 
$\rho(u)\in\FIM(\Gamma)$ (where 
$u\in (\Gamma \cup \Gamma^{-1})^*$) 
is idempotent (i.e., $\rho(uu) = \rho(u)$) if and only if $r(u) = \varepsilon$.

For a finite set $P \subseteq (\Gamma \cup \Gamma^{-1})^* \times (\Gamma \cup \Gamma^{-1})^*$
define $\FIM(\Gamma)/P = (\Gamma \cup \Gamma^{-1})^*/\tau_P$ to be the inverse
monoid with the set $\Gamma$ of generators and the set $P$
of relations, where $\tau_P$ is the smallest congruence on $(\Gamma \cup \Gamma^{-1})^*$
generated by  $\rho \cup P$. Viewed as a morphism, this
congruence 
factors as $\tau_P = \rho \circ  \nu_P$ with
$\FIM(\Gamma)/\nu_P = \FIM(\Gamma)/P$.
We say that $P \subseteq (\Gamma \cup \Gamma^{-1})^* \times (\Gamma
\cup \Gamma^{-1})^*$ is an \emph{idempotent presentation} if for all $(e,f) \in P$, 
$\rho(e)$ and $\rho(f)$ are both idempotents of $\FIM(\Gamma)$, i.e.,
$r(e)=r(f)=\varepsilon$ by the remark above. 
In this paper, we are concerned with inverse monoids of the form 
$\FIM(\Gamma)/P$ for a finite idempotent presentation $P$. 
In this case, since every identity $(e,f) \in P$ is true in $\FG(\Gamma)$
(we have $\delta(e)=\delta(f)=1$), there also exists a 
congruence $\gamma_P$ on $\FIM(\Gamma)/P$ with 
$(\FIM(\Gamma)/P)/\gamma_P = \FG(\Gamma)$.
The following commutative diagram summarizes all morphisms introduced
so far. 
\begin{center}
\setlength{\unitlength}{1.5mm}
\begin{picture}(40,15)(0,-5)
\gasset{Nadjust=wh,Nadjustdist=.5,Nframe=n,Nfill=n,ELdist=0.5,AHnb=1,AHLength=1.5,AHlength=1.5,linewidth=.2}
\node(FM)(20,10){$(\Gamma \cup \Gamma^{-1})^*$}
\node(FIM)(0,0){$\FIM(\Gamma)$}
\node(FG)(40,0){$\FG(\Gamma)$}
\node(Inv)(20,0){$\FIM(\Gamma)/P$}
\drawedge[ELside=r](FM,FIM){$\rho$}
\drawedge[curvedepth=-3,ELside=r](FIM,FG){$\gamma$}
\drawedge(FM,FG){$\delta$}
\drawedge[ELside=r](FM,Inv){$\tau_P$}
\drawedge[ELpos=50](FIM,Inv){$\nu_P$}
\drawedge(Inv,FG){$\gamma_P$}
\end{picture}
\end{center}
In the sequel, the meaning of the congruences
$\rho,\delta,\gamma_P,\gamma,\tau_P$, and $\nu_P$ will be fixed.

To solve the word problem for $\FIM(\Gamma)/P$, Margolis and Meakin
\cite{margolis:1993} used a closure operation for Munn trees, which
is based on work of Stephen \cite{stephen:1990}. We shortly
review the ideas here. As remarked in \cite{margolis:1993}, every idempotent
presentation $P$ can be replaced by the idempotent presentation $P' = \{ (e,ef), (f,ef) \mid (e,f)
\in P\}$, i.e., $\FIM(\Gamma)/P = \FIM(\Gamma)/P'$. 
Since $\MT(e) \subseteq \MT(ef) \supseteq \MT(f)$ if $r(e)=r(f)=\varepsilon$,
we can restrict in the following to idempotent presentations $P$
such that $\MT(e) \subseteq \MT(f)$ for all $(e,f) \in P$.  
Define a rewriting relation $\Rightarrow_P$ on prefix-closed subsets 
of $\IRR(\Gamma)$ as follows, where $U,V \subseteq \IRR(\Gamma)$:
$U \Rightarrow_P V$ if and only if 
$$
\exists (e,f) \in P \; \exists u \in U \big(
 r(u\, \MT(e)) \subseteq  U \; \text{and} \;
V = U \cup r(u\, \MT(f)) \big).
$$
Finally, define the closure of $U \subseteq \IRR(\Gamma)$ with respect to the presentation $P$
as 
$$\cl_P(U) = \bigcup \{ V \mid U \stackrel{*}{\Rightarrow}_P V \}.$$
\begin{example} \label{E 1}
Assume that $\Gamma = \{a,b\}$, $P = \{ (a a^{-1}, a^2 a^{-2}), (b b^{-1}, b^2
b^{-2})\}$ and $u = aa^{-1}bb^{-1}$.
The graphical representations for these elements look as follows:
\begin{center}
\setlength{\unitlength}{1mm}
\begin{picture}(68,20)(0,-2)
\gasset{Nadjust=n,ExtNL=y,NLdist=1,Nw=1,Nh=1,AHnb=1,Nfill=y,ELdist=0.5,AHLength=2,AHlength=2,linewidth=.3}
\node[Nw=2,Nh=2,Nfill=n](a)(0,16){}
\node(b)(0,8){}
\drawedge[ELside=r](a,b){$a$}
\node[Nw=2,Nh=2,Nfill=n](c)(12,16){}
\node(d)(12,8){}
\node(e)(12,0){}
\drawedge[ELside=r](c,d){$a$}
\drawedge[ELside=r](d,e){$a$}
\put(4,12){\Large =}

\node[Nw=2,Nh=2,Nfill=n](a')(30,16){}
\node(b')(30,8){}
\drawedge[ELside=r](a',b'){$b$}
\node[Nw=2,Nh=2,Nfill=n](c')(42,16){}
\node(d')(42,8){}
\node(e')(42,0){}
\drawedge[ELside=r](c',d'){$b$}
\drawedge[ELside=r](d',e'){$b$}
\put(34,12){\Large =}

\node[Nw=2,Nh=2,Nfill=n](a'')(60,16){}
\node(b'')(60,8){}
\drawedge[ELside=r](a'',b''){$a$}
\node(c'')(68,16){}
\drawedge[ELside=r](a'',c''){$b$}
\end{picture}
\end{center} 
Then the closure $\cl_P(\MT(u))$ is $\{ a^n \mid n \geq 0\} \cup 
\{ b^n \mid n \geq 0\} \subseteq \IRR(\Gamma)$. 
\end{example}
\noindent
Margolis and Meakin proved the following result:

\begin{theorem}[\cite{margolis:1993}]\label{wordproblem}
Let $P$ be an idempotent presentation and let $u,v \in (\Gamma \cup \Gamma^{-1})^*$. 
Then $\tau_P(u) = \tau_P(v)$ if and only if  $(r(u)=r(v)$
and $\cl_P(\MT(u)) =  \cl_P(\MT(v)))$.
\end{theorem}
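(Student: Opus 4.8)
The plan is to reduce the two-sided claim to the combinatorics of the rewriting relation $\Rightarrow_P$ on Munn trees, using Munn's Theorem (Thm.~\ref{munn}) as the underlying invariant for the free inverse monoid. Recall that $\tau_P$ is the congruence generated by $\rho\cup P$; hence $\tau_P(u)=\tau_P(v)$ holds iff $u$ and $v$ are linked by a finite chain in which each step is either an equality in $\FIM(\Gamma)$ (i.e.\ $\rho(w)=\rho(w')$) or the in-context replacement of a factor $e$ by $f$ for some $(e,f)\in P$. Since $\rho\subseteq\delta$ and $\delta(e)=\delta(f)=1$ for every relation, every step preserves the free-group image, so $\tau_P(u)=\tau_P(v)$ forces $r(u)=r(v)$; this disposes of the $r$-component in both directions and lets me concentrate on the closures. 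Throughout I assume, as the excerpt permits, that $\MT(e)\subseteq\MT(f)$ for all $(e,f)\in P$, and I use that $\cl_P$ is an extensive, monotone and idempotent closure operator, together with the \emph{monotonicity} of the one-step relation: if $U\Rightarrow_P V$ and $U\subseteq U'$ then $U'\Rightarrow_P U'\cup V$ (the positive precondition $r(g\,\MT(e))\subseteq U$ survives enlargement).

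For necessity I show that a single in-context $P$-step leaves $\cl_P(\MT(\cdot))$ unchanged. Writing $g=\delta(x)$, a direct prefix computation gives $\MT(xey)=\MT(x)\cup r(g\,\MT(e))\cup r(g\,\MT(y))$ and the analogous identity for $\MT(xfy)$, so $\MT(xey)\subseteq\MT(xfy)=\MT(xey)\cup r(g\,\MT(f))$. The node $g$ lies in $\MT(xey)$ and $r(g\,\MT(e))\subseteq\MT(xey)$, so the rule $(e,f)$ applied at $g$ yields $\MT(xey)\Rightarrow_P\MT(xfy)$. Hence $\MT(xfy)\subseteq\cl_P(\MT(xey))$, and monotonicity together with idempotence give $\cl_P(\MT(xey))=\cl_P(\MT(xfy))$; the $\rho$-steps preserve $\MT$ outright by Munn's Theorem. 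Iterating along the chain yields $\cl_P(\MT(u))=\cl_P(\MT(v))$.

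For sufficiency the central lemma is that climbing the rewriting relation does not change the $\tau_P$-class: if $\MT(u)\Rightarrow_P V$ then $\tau_P(u)=\tau_P(u_V)$ for any word $u_V$ with $\MT(u_V)=V$ and $r(u_V)=r(u)$. To prove it I take the irreducible word $g$ at which the rule is applied and form the conjugated idempotents $geg^{-1}$ and $gfg^{-1}$: because $g\in\MT(u)$ and $\MT(u)$ is prefix-closed one checks $\rho(geg^{-1}u)=\rho(u)$ (the added tree $r(g\,\MT(e))$ already sits inside $\MT(u)$) and $\rho(gfg^{-1}u)=\rho(u_V)$, while $(e,f)\in P$ gives $\tau_P(geg^{-1}u)=\tau_P(gfg^{-1}u)$; chaining the three equalities proves the lemma, and induction extends it to $\MT(u)\Red_P Z$. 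Finally I remove the obstruction caused by closures that are infinite (as in Example~\ref{E 1}): I cannot simply realise $\cl_P(\MT(u))$ by one word. Instead I use the monotonicity of $\Rightarrow_P$ to obtain confluence---if $\cl_P(U)=\cl_P(U')$ then $U$ and $U'$ admit a \emph{common} finite descendant $Z$ with $U\Red_P Z$ and $U'\Red_P Z$. Concretely, since $U'\subseteq\cl_P(U)$ there is $Z_1\supseteq U\cup U'$ with $U\Red_P Z_1$; since $Z_1\subseteq\cl_P(U')$ there is $Z_2\supseteq Z_1$ with $U'\Red_P Z_2$; replaying the derivation $U'\Red_P Z_2$ from $Z_1$ via monotonicity gives $Z_1\Red_P Z_2$, whence $U\Red_P Z_2$ as well, so $Z:=Z_2$ works. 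Applying the climbing lemma from both $u$ and $v$ up to a word $z$ realising such a common $Z$ (with $r(z)=r(u)=r(v)$) gives $\tau_P(u)=\tau_P(z)=\tau_P(v)$ by Munn's Theorem.

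The main obstacle is precisely this last point: because $\cl_P(\MT(u))$ may be infinite, sufficiency cannot be reduced to a single comparison of Munn trees, and the whole argument hinges on establishing enough confluence of $\Rightarrow_P$ to produce a common finite meeting tree from the two words; the monotonicity of the positive rewriting rule is what makes this confluence available.
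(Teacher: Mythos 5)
The paper does not prove this theorem; it imports it from \cite{margolis:1993} (with \cite{stephen:1990} underlying the closure construction), so there is no in-paper argument to compare against. Your reconstruction is correct and is essentially the standard Margolis--Meakin/Stephen proof: the forward direction via the observation that an in-context replacement of $e$ by $f$ is itself a $\Rightarrow_P$-step on Munn trees (using $\MT(xey)=\MT(x)\cup r(g\,\MT(e))\cup r(g\,\MT(y))$ with $g=r(x)$), and the converse via the conjugated idempotents $geg^{-1}$, $gfg^{-1}$, which turn each rewriting step into a $\tau_P$-equality while Munn's theorem handles the $\rho$-steps. Your handling of the only delicate point --- that $\cl_P(\MT(u))$ may be infinite, so one must extract a common \emph{finite} descendant $Z$ of $\MT(u)$ and $\MT(v)$ --- is sound: the monotonicity of the one-step relation gives directedness of $\{V \mid U \stackrel{*}{\Rightarrow}_P V\}$, which is exactly what your $Z_1, Z_2$ construction uses (and is also what justifies the idempotence of $\cl_P$ that you invoke in the forward direction). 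No gaps.
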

\noindent
The result of Munn for $\FIM(\Gamma)$ (Thm.~\ref{munn})
is a special case of this result for $P = \emptyset$.
Note also that $\cl_P(\MT(u)) =  \cl_P(\MT(v))$ if and only
if $\MT(u) \subseteq  \cl_P(\MT(v))$ and $\MT(v) \subseteq  \cl_P(\MT(u))$.
Margolis and Meakin used Thm.~\ref{wordproblem} in connection
with \mbox{Rabin's} tree theorem
in order to give a solution for 
the word problem for the monoid $\FIM(\Gamma)/P$. 
Using tree automata techniques, a logspace algorithm
for the word problem for  $\FIM(\Gamma)/P$ was given in \cite{LoOn06IC}.
For this result, it is important that the idempotent presentation $P$ is 
not part of the input. The uniform version of the word problem,
where $P$ is part of the input, is $\EXPTIME$-complete  \cite{LoOn06IC}.

\section{Straight-line programs} \label{S SLP}

We are using straight-line programs as a succinct representation of
strings with reoccurring subpatterns \cite{PlRy99}. A
\textit{straight-line program (SLP) over a finite alphabet $\Gamma$} is a
context free grammar $\dA=(V,\Gamma,S,P)$, where $V$ is the set of
\textit{nonterminals}, $\Gamma$ is the set of \textit{terminals},
$S\in V$ is the \textit{initial nonterminal}, and $P\subseteq V \times
(V\cup \Gamma)^*$ is the set of \textit{productions} such that (i) for
every $X\in V$ there is exactly one $\alpha \in (V \cup \Gamma)^*$
with $(X,\alpha)\in P$ and (ii) there is no cycle in the relation
$\{(X,Y)\in V\times V \mid  \exists \alpha \in  (V \cup \Gamma)^* Y  (V \cup \Gamma)^* : 
(X,\alpha) \in P \}$. These conditions ensure that the language
generated by the straight-line program $\dA$ contains exactly one word
$\eval(\dA)$.
The size of $\dA$ is $|\dA|= \sum_{(X,\alpha)\in P}|\alpha|$. 

\begin{remark}  \label{remark:SLP}
The following problems can be solved in  polynomial time:
\begin{enumerate}[(a)]
\item Given an SLP $\dA$, calculate $|\eval(\dA)|$ in binary representation.
\item Given an SLP $\dA$ and two binary coded numbers $1 \leq i \leq j \leq  |\eval(\dA)|$,
compute an SLP $\dB$ with $\eval(\dB) = \eval(\dA)[i,j]$.
\end{enumerate}
Also notice that $\eval(\dA)$ can be computed from $\dA$ by a $\PSPACE$-transducer.
\end{remark}
In \cite{Pla94}, Plandowski presented a polynomial time algorithm
for testing whether $\eval(\dA) = \eval(\dB)$ for two given 
SLPs $\dA$ and $\dB$. A cubic algorithm was presented by Lifshits \cite{Lif07}.
In fact, Lifshits gave an algorithm for compressed pattern
matching: given SLPs $\dA$ and $\dB$, is $\eval(\dA)$ a factor of $\eval(\dB)$?
The running time of his algorithm is $O(|\dA| \cdot |\dB|^2)$.

Let $M$ be a finitely generated monoid and let $\Gamma$ be a finite 
generating set for $M$. The {\em compressed word problem} 
for $M$ is the following computational problem:

\medskip
\noindent
INPUT: SLPs $\dA$ and $\dB$ over the alphabet $\Gamma$.

\noindent
QUESTION: Does $\eval(\dA) = \eval(\dB)$ hold in $M$?

\medskip
\noindent
The above mentioned result of  Plandowski \cite{Pla94} means that
the compressed word problem for a finitely generated free monoid
can be solved in polynomial time. 
The following result was shown in \cite{Loh06siam}.

\begin{theorem}[\protect{\cite{Loh06siam}}] \label{CWP-F}
For every finite alphabet $\Gamma$, 
the compressed word problem for $\FG(\Gamma)$ 
can be solved in polynomial time (and is $\Ptime$-complete if 
$|\Gamma| \geq 2$).
\end{theorem}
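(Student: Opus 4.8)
The theorem has two parts, and the substantive claim is the polynomial-time upper bound, so I focus there. Equality in $\FG(\Gamma)$ is equality of reduced normal forms: as recalled above, $\eval(\dA)=\eval(\dB)$ in $\FG(\Gamma)$ if and only if $r(\eval(\dA))=r(\eval(\dB))$ as words in $\IRR(\Gamma)$. My plan is therefore to reduce the compressed word problem for $\FG(\Gamma)$ to the compressed word problem for the free monoid $(\Gamma\cup\Gamma^{-1})^*$, which is solvable in polynomial time by Plandowski's equality test \cite{Pla94}. Concretely, from an SLP $\dA$ over $\Gamma\cup\Gamma^{-1}$ I would compute, in polynomial time, an SLP $\dA'$ with $\eval(\dA')=r(\eval(\dA))$; doing the same for $\dB$ and then invoking Plandowski on $\dA'$ and $\dB'$ settles the problem.

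The SLP $\dA'$ is built bottom-up along the acyclic nonterminal order of $\dA$. For each nonterminal $X$ deriving the word $w_X$ I maintain an SLP for the reduced word $r(w_X)$. Terminal productions are immediate; for a production $X\to YZ$ I already have SLPs for $u=r(w_Y)$ and $v=r(w_Z)$ and must produce one for $r(uv)$. Since $u$ and $v$ are reduced, reduction of $uv$ only cancels a suffix of $u$ against a matching prefix of $v$: with $k$ the largest integer, $0\le k\le\min(|u|,|v|)$, such that $u[|u|-k+1,|u|]=(v[1,k])^{-1}$, one has $r(uv)=u[1,|u|-k]\,v[k+1,|v|]$, and maximality of $k$ guarantees the result is again reduced. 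The crucial observation is that the predicate ``$u[|u|-k+1,|u|]=(v[1,k])^{-1}$'' is downward closed in $k$, so $k$ can be found by binary search over $O(\log\min(|u|,|v|))$ candidate values. Each test is carried out on compressed data: using Remark~\ref{remark:SLP}(b) I extract SLPs for the relevant suffix of $u$ and prefix of $v$, form the SLP for the formal inverse of the latter (reverse each right-hand side and replace each letter by its inverse), and compare the two with Plandowski's test \cite{Pla94}. Since the word lengths are at most exponential in $|\dA|$, their binary lengths are polynomial, so the whole search — and hence the construction of the SLP for $r(uv)$ from two more substring extractions and one concatenation — runs in polynomial time per production.

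The step I expect to be the main obstacle is keeping the total size of $\dA'$ polynomial. Handled naively, each production triggers substring extractions that reference the SLPs built for $Y$ and $Z$, and iterating this over all nonterminals risks a compounding blowup. The remedy is to carry out the whole computation inside a single \emph{composition system} — an SLP enriched with explicit substring (cut) operators, all sharing one underlying set of nonterminals — and to exploit the fact that substring extraction increases size only by a controlled (polynomial) amount before finally flattening the composition system into an ordinary SLP. Verifying that this bookkeeping stays polynomial across the $O(|\dA|)$ productions is the technical heart of the upper bound; Plandowski's test and the cancellation computation are then applied as black boxes.

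For the lower bound I would prove $\Ptime$-hardness for $|\Gamma|\ge 2$ by a logspace reduction from the Circuit Value Problem, which is $\Ptime$-complete. Writing $a,b$ for two distinct generators, I encode the gatewise evaluation of the circuit by a pair of SLPs over $\{a,b\}\subseteq\Gamma\cup\Gamma^{-1}$ built bottom-up along the circuit, arranged so that the two compressed words they generate are equal in $\FG(\Gamma)$ if and only if the output gate evaluates to $0$ (say). The delicate point is the gate encoding, in particular realizing conjunctive behaviour compositionally out of concatenation, where the presence of two non-commuting generators supplies the room that a single generator would lack. Indeed, for $|\Gamma|=1$ the group is $\mathbb{Z}$ and the problem collapses to comparing exponent sums — computable by polynomially many additions of binary numbers in a single bottom-up pass — so hardness genuinely requires rank at least two.
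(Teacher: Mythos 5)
The paper does not prove this theorem; it is imported verbatim from \cite{Loh06siam}, so there is no internal proof to compare against. Measured against the actual argument in that source, your upper bound follows essentially the same route: compute, bottom-up over the productions, a compressed representation of the reduced normal form $r(\eval(\dA))$, find the cancellation length $k$ at each concatenation $X\to YZ$ by binary search (your observation that the predicate is downward closed in $k$ is correct, since it amounts to $u[|u|-i+1]=v[i]^{-1}$ for $i\le k$), test each candidate with Plandowski's algorithm, and finally compare the two reduced forms. You also correctly locate the crux: plain SLPs do not obviously support this iteration with polynomial blowup, and the cited proof indeed works with composition systems (SLPs with cut operators), relying on the nontrivial facts --- due to Hagenah --- that a composition system can be converted to an equivalent SLP in polynomial time and hence that Plandowski-style equality tests remain available. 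You name this machinery but take it entirely on faith; since the intermediate objects for $r(w_Y)$ and $r(w_Z)$ are composition systems rather than SLPs, Remark~\ref{remark:SLP}(b) does not apply to them directly, and without the flattening lemma the whole size accounting is unestablished. That is the one substantive gap in the upper bound.

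The lower bound, by contrast, is not really a proof sketch but a statement of intent. Reducing from the circuit value problem is the right idea, but the entire difficulty is the gate encoding you defer: in a free group you cannot realize conjunction ``compositionally out of concatenation'' in any naive way, because concatenation of two words each equal to $1$ or to a fixed nontrivial element does not compute AND. The known argument uses commutator-like gadgets together with a free-group lemma controlling when such expressions vanish (this is where rank $\ge 2$ enters), and an SLP-level simulation of the circuit wiring. As written, your hardness argument establishes nothing beyond the (correct) remark that rank $1$ collapses to integer arithmetic. Since the theorem is cited rather than proved in this paper, neither gap affects the paper, but if you intend your text as a self-contained proof, both the composition-system size bound and the explicit AND-gadget must be supplied.
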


\section{Compressed word problem for $\FIM(\Gamma)$}

Recall that the word problem for $\FIM(\Gamma)$ can be solved in logspace \cite{LoOn06IC}.
In the compressed setting we have:

\begin{theorem} \label{thm:cwp-pi2p}
For every finite alphabet $\Gamma$ with $|\Gamma| \geq 2$, 
the compressed word problem for $\FIM(\Gamma)$ is $\Pi^p_2$-complete.
\end{theorem}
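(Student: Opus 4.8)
The plan is to prove the two directions separately, using Munn's Theorem (Thm.~\ref{munn}) to split the problem into a question about reduced words and a question about Munn trees.

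For membership in $\Pi^p_2$, write $u = \eval(\dA)$ and $v = \eval(\dB)$ over $\Gamma \cup \Gamma^{-1}$. By Thm.~\ref{munn}, $\rho(u) = \rho(v)$ iff $r(u) = r(v)$ and $\MT(u) = \MT(v)$. The first conjunct is an instance of the compressed word problem for $\FG(\Gamma)$, hence decidable in polynomial time by Thm.~\ref{CWP-F}. For the second conjunct I would write $\MT(u) = \MT(v)$ as the conjunction of the two inclusions $\MT(u) \subseteq \MT(v)$ and $\MT(v) \subseteq \MT(u)$, and observe that, since $\MT(u) = \{\, r(u[1,i]) \mid 0 \le i \le |u| \,\}$,
$$\MT(u) \subseteq \MT(v) \iff \forall i \in \{0, \ldots, |u|\}\ \exists j \in \{0, \ldots, |v|\} : r(u[1,i]) = r(v[1,j]).$$
Here $|u|$ and $|v|$ are at most exponential in the input size, so the indices $i,j$ range over numbers with polynomially many bits. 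Given such $i$ in binary, Remark~\ref{remark:SLP}(b) produces in polynomial time an SLP for the prefix $u[1,i]$, and then $r(u[1,i]) = r(v[1,j])$ is again an instance of the compressed word problem for $\FG(\Gamma)$, decidable in polynomial time. Thus the matrix above is a polynomial-time predicate in the input and the quantified indices. Folding the fixed polynomial-time test $r(u)=r(v)$ and the two inclusions into a single alternation $\forall (i_1,i_2)\, \exists (j_1,j_2)$ (legitimate because $\Pi^p_2$ is closed under conjunction and a fixed polynomial-time condition may be absorbed into the matrix) places the compressed word problem for $\FIM(\Gamma)$ in $\Pi^p_2$.

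For $\Pi^p_2$-hardness I would reduce from a $\forall\exists$-variant of SUBSETSUM, asking whether for every subset $I$ of a ``universal'' list of binary numbers there is a subset $J$ of an ``existential'' list with $\sum_{i\in I}a_i + \sum_{j\in J}b_j = t$; adapting the textbook reduction from SAT to SUBSETSUM, with the two quantifier blocks of a $\Pi^p_2$-complete $\forall\exists$-SAT instance mapped to the universal and existential number lists, shows that this alternating SUBSETSUM problem is $\Pi^p_2$-complete. The engine of the reduction is the SLP encoding of subset sums from \cite{Loh06siam}: the doubling recursion $U_i = U_{i-1}\, a^{a_i}\, U_{i-1}\, a^{-a_i}$, with each block $a^{a_i}$ written by repeated squaring, yields a polynomial-size SLP whose generated word is a closed loop in $\Cmc(\Gamma)$ visiting exactly the positions $\sum_{i\in I} a_i$. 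I would hang a pendant $b$-edge at each visited position so that it becomes a node of the Munn tree, and I would spread the universal choices into distinct high-order coordinates (weights that are large powers of a base exceeding $t$) so that distinct subsets $I$ yield distinct, identifiable marker nodes $q_I$, while the low-order coordinate still carries the running sum to be compared with $t$. The construction is arranged so that both words are closed loops (hence $r(u) = r(v) = \varepsilon$), so that $\dA$ places the marker $q_I$ for every $I$ whereas $\dB$ reaches $q_I$ only when the existential search over the $b_j$ actually attains $t$, and so that $\dA$ additionally visits every exploration node of $\dB$. Then $\MT(\dB) \subseteq \MT(\dA)$ holds unconditionally, and $\MT(\dA) \subseteq \MT(\dB)$, i.e.\ $\rho(u)=\rho(v)$, holds iff every universal marker $q_I$ lies in $\MT(\dB)$, which is exactly positivity of the $\forall I\,\exists J$ instance.

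I expect the hardness direction to be the main obstacle, and within it the delicate SLP engineering: keeping all SLPs of polynomial size while (i) forcing the Munn-tree nodes to be exactly the intended markers and the shared exploration nodes rather than a filled-in backbone that would create spurious coincidences, (ii) making the marker node $q_I$ injective in the universal choice yet reachable precisely under the target condition on the existential subset, and (iii) balancing the two inclusions so that their conjunction, together with $r(u)=r(v)$, captures exactly the quantifier alternation $\forall I\,\exists J$ and nothing more. Pinning down the precise SUBSETSUM variant that is simultaneously $\Pi^p_2$-complete and compatible with this coordinate encoding is the part I would treat most carefully.
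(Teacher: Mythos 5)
Your proposal matches the paper's proof in both directions: the upper bound is the identical Munn-tree decomposition into a free-group equality test (Thm.~\ref{CWP-F}) plus two $\forall i\,\exists j$ conditions on reduced prefixes extracted via Remark~\ref{remark:SLP}, and the lower bound reduces from the same $\forall\exists$-SUBSETSUM problem (which the paper simply cites as $\Pi^p_2$-complete from \cite{BeKaLaPlRy02}, so you need not re-derive it) using the same SLP subset-sum encoding from \cite{Loh06siam}, with pendant $b$-edges hung on an $a$-labeled line and the universal choices separated into disjoint segments of length $s$ so that one Munn-tree inclusion is trivial and the other encodes the quantifier alternation. The engineering worry you flag in (i) is resolved in the paper by embracing the filled-in backbone rather than avoiding it: both words reduce to $a^{s\cdot 2^m}$, so the entire $a$-line lies in both Munn trees and all distinguishing information sits in the pendant $b$-edges, exactly as your plan requires.
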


\begin{proof}
For the $\Pi^p_2$ upper bound, let $\dA$ and $\dB$ be 
SLPs over some alphabet $\Gamma \cup \Gamma^{-1}$ and let
$m = |\eval(\dA)|$ and $n = |\eval(\dB)|$. These numbers
can be computed in polynomial time by Remark~\ref{remark:SLP}.
By Thm.~\ref{munn}, we have
$\eval(\dA) = \eval(\dB)$ in $\FIM(\Gamma)$ if and only if:
\begin{align}
& \eval(\dA) = \eval(\dB) \text{ in } \FG(\Gamma) \label{eqFG1} \\
& \forall i \in \{0,\ldots, m\} \,\exists j \in \{0,\ldots, n\} : \eval(\dA)[1,i] =
\eval(\dB)[1,j] \text{ in } \FG(\Gamma) \label{eqFG2}\\
& \forall i \in \{0,\ldots, n\} \,\exists j \in \{0,\ldots, m\} : \eval(\dB)[1,i] =
\eval(\dA)[1,j] \text{ in } \FG(\Gamma) \label{eqFG3}
\end{align}
Thm.~\ref{CWP-F} implies that (\ref{eqFG1}) can be checked in 
polynomial time, whereas (\ref{eqFG2}) and (\ref{eqFG3}) are 
$\Pi^p_2$-properties.

It suffices to prove the lower bound for $\Gamma=\{a,b\}$.
We make a logspace reduction from the following $\Pi^p_2$-complete
problem \cite{BeKaLaPlRy02}, where
$\overline{u} \cdot \overline{v}
= u_1 v_1 + \cdots + u_n v_n$ denotes the scalar product of two
integer vectors $\overline{u} = (u_1,\ldots,u_n)$,
$\overline{v} = (v_1,\ldots,v_n)$:

\medskip

\noindent
INPUT: vectors $\overline{u}=(u_1,\ldots,u_m) \in \mathbb{N}^m$, 
$\overline{v}=(v_1,\ldots,v_n) \in \mathbb{N}^n$, and $t \in \mathbb{N}$ (all coded binary)

\noindent
QUESTION: Does $\forall \overline{x} \in \{0,1\}^m \exists \overline{y} \in
\{0,1\}^n : \overline{u} \cdot \overline{x} + \overline{v} \cdot
\overline{y} = t$ hold?

\medskip

\noindent
Let $s = u_1 + \cdots + u_m + v_1 + \cdots + v_n$,
$s_u = u_1 + \cdots + u_m$, and
$s_v =  v_1 + \cdots + v_n$.
W.l.o.g. we can assume $t < s$.
Using the construction from 
\cite{Loh06siam} (proof of Theorem 5.2) we can construct in logspace an SLP $\dA_1$ such that
$\eval(\dA_1) = \prod_{\overline{x} \in \{0,1\}^m}  a^{\overline{u} \cdot
  \overline{x}} A_1 a^{s_u - \overline{u} \cdot \overline{x}}$.
Here the product is taken over all tuples from $ \{0,1\}^m$ in lexicographic 
order.
By replacing $A_1$ by $A_2 a^{s_v}$ (which can be easily generated by a
small SLP), we obtain an SLP $\dA_2$ with 
$\eval(\dA_2) = \prod_{\overline{x} \in \{0,1\}^m}  a^{\overline{u} \cdot
  \overline{x}} A_2 a^{s - \overline{u} \cdot \overline{x}}$.
Similarly, we obtain an SLP $\dA_3$ with
$\eval(\dA_3) = \prod_{\overline{y} \in \{0,1\}^n}  a^{\overline{v} \cdot
  \overline{y}} (b b^{-1} a^{-s_v}) a^{s_v - \overline{v} \cdot \overline{y}}$.
Finally, be replacing $A_2$ in $\dA_2$ by the start nonterminal of $\dA_3$
we obtain an SLP $\dA$ with
$$
\eval(\dA) = \prod_{\overline{x} \in \{0,1\}^m}  \biggl[
a^{\overline{u} \cdot \overline{x}} 
\prod_{\overline{y} \in \{0,1\}^n}  \biggl( a^{\overline{v} \cdot
  \overline{y}} b b^{-1} a^{-s_v} a^{s_v - \overline{v} \cdot \overline{y}} \biggr)
a^{s - \overline{u} \cdot \overline{x}}
\biggr].
$$
Moreover, it is easy to construct a second SLP $\dB$ such that
$$
\eval(\dB) = \eval(\dA) a^{-s \cdot 2^m} \bigl( a^t b b^{-1} a^{s-t} \bigr)^{2^m} .
$$
We claim that $\eval(\dA) = \eval(\dB)$ in $\FIM(\{a,b\})$ if and only if
\begin{equation} \label{subsetsum}
\forall \overline{x} \in \mathbb{N}^m \exists \overline{y} \in
\mathbb{N}^n : \overline{u} \cdot \overline{x} + \overline{v} \cdot
\overline{y} = t.
\end{equation}
We have $r(\eval(\dA)) = r(\eval(\dB)) = a^{s \cdot 2^m}$. Thus,
$\eval(\dA) = \eval(\dB)$ holds in $\FIM(\{a,b\})$ 
if and only if $\MT(\eval(\dA)) = \MT(\eval(\dB))$.
Since $\eval(\dA)$ is a prefix of $\eval(\dB)$,
we obtain $\MT(\eval(\dA)) \subseteq \MT(\eval(\dB))$.
Moreover, for the prefix $\eval(\dA) a^{-s \cdot 2^m}$ of $\eval(\dB)$
we have $r(\eval(\dA) a^{-s \cdot 2^m}) = \varepsilon$ and
$\MT(\eval(\dA) a^{-s \cdot 2^m}) = \MT(\eval(\dA))$.
This and the fact that $\MT(\eval(\dA)) \subseteq \MT(\eval(\dB))$ 
implies that $\MT(\eval(\dA)) = \MT(\eval(\dB))$ if and only if 
\begin{equation}\label{ink3}
\MT( (a^t b b^{-1} a^{s-t})^{2^m} ) \subseteq \MT(\eval(\dA)) .
\end{equation}
We show that (\ref{ink3}) is equivalent to (\ref{subsetsum}).
We have
$$
\MT( (a^t b b^{-1} a^{s-t})^{2^m} ) = \{ a^i \mid 0 \leq i \leq s \cdot 2^m\}
\cup \{ a^{t + k \cdot s} b \mid 0 \leq k < 2^m \}.  
$$
Since $r(\eval(\dA)) = a^{s \cdot 2^m}$,
we have $a^i \in \MT(\eval(\dA))$ for all $0 \leq i \leq s \cdot 2^m$.
Hence, (\ref{ink3}) is equivalent to
$a^{t + k \cdot s} b \in \MT(\eval(\dA))$
for every $0 \leq k < 2^m$, i.e.
(for a bit vector $\overline{u} = (u_1,\ldots,u_n) \in \{0,1\}^n$
let $n(\overline{u}) = \sum_{i=1}^n u_i 2^{i-1}$ be the number 
represented by $\overline{u}$)
\begin{equation} \label{memb1}
 \forall \overline{x} \in \{0,1\}^m : a^{n(\overline{x}) \cdot s + t} b \in \MT(\eval(\dA)) .
\end{equation}
Now, 
$\MT(\eval(\dA)) \cap a^* b = \{ a^{n(\overline{x}) \cdot s + \overline{u} \cdot \overline{x} + \overline{v} \cdot
\overline{y}} b \mid \overline{x} \in \{0,1\}^m, \overline{y} \in \{0,1\}^n \}$.
Hence, (\ref{memb1}) if and only if
$\forall \overline{x} \in \{0,1\}^m \exists \overline{y} \in \{0,1\}^n : \overline{u} \cdot \overline{x} + \overline{v} \cdot
\overline{y} = t$.
This concludes the proof.
\qed
\end{proof}
For a free inverse monoid of rank one, the compressed word problem is
simpler:

\begin{proposition} \label{prop:rank-one}
The compressed word problem for $\FIM(\{a\})$ can be
solved in polynomial time.
\end{proposition}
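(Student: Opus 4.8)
The plan is to exploit the degeneracy of the rank-one case: $\FG(\{a\})$ is the infinite cyclic group $\mathbb{Z}$, so the Cayley graph $\Cmc(\{a\})$ is a bi-infinite line and every Munn tree degenerates to an interval of integers. Identifying the node $a^k$ with the integer $k$, a word $w \in \{a,a^{-1}\}^*$ traces a walk on $\mathbb{Z}$ that starts at $0$, where each letter $a$ is a step $+1$ and each $a^{-1}$ a step $-1$. Writing $S_0 = 0$ and $S_i$ for the $i$-th prefix sum, Munn's description (Thm.~\ref{munn}) specialises to: $r(w) = a^p$ with $p = S_{|w|}$, while the Munn tree $\MT(w)$ is exactly the integer interval $[\ell, r]$ with $\ell = \min_i S_i \le 0$ and $r = \max_i S_i \ge 0$ (the empty prefix $S_0 = 0$ forces $\ell \le 0 \le r$, so the interval always contains the node $\varepsilon$). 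Consequently $\rho(w) \in \FIM(\{a\})$ is uniquely encoded by the triple $(\ell, r, p)$, and $\eval(\dA) = \eval(\dB)$ in $\FIM(\{a\})$ holds if and only if the triples associated with $\eval(\dA)$ and $\eval(\dB)$ coincide.

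The core of the argument is to compute this triple from an SLP in polynomial time. For a nonterminal $X$ deriving a string $w_X$, I would compute the displacement $\mathrm{disp}(X) = S_{|w_X|}$, the maximal prefix sum $\mathrm{hi}(X)$, and the minimal prefix sum $\mathrm{lo}(X)$, using the standard concatenation recurrences
\begin{align*}
\mathrm{disp}(w'w'') &= \mathrm{disp}(w') + \mathrm{disp}(w''),\\
\mathrm{hi}(w'w'') &= \max\bigl(\mathrm{hi}(w'),\, \mathrm{disp}(w') + \mathrm{hi}(w'')\bigr),\\
\mathrm{lo}(w'w'') &= \min\bigl(\mathrm{lo}(w'),\, \mathrm{disp}(w') + \mathrm{lo}(w'')\bigr),
\end{align*}
with base cases $\mathrm{disp}(a)=1,\ \mathrm{hi}(a)=1,\ \mathrm{lo}(a)=0$ and symmetrically for $a^{-1}$, iterating the binary rule across each right-hand side. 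Since $|w_X| \le 2^{|\dA|}$, all three quantities have absolute value at most $2^{|\dA|}$ and hence $O(|\dA|)$ bits; storing them in binary and evaluating the recurrences bottom-up in the acyclic production order performs only polynomially many additions, comparisons, and $\min/\max$ operations on polynomially long binary numbers. This yields $(\ell_A, r_A, p_A) = (\mathrm{lo}(S), \mathrm{hi}(S), \mathrm{disp}(S))$ for the start nonterminal $S$ of $\dA$ in polynomial time, and likewise $(\ell_B, r_B, p_B)$ for $\dB$.

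It then remains to compare the two triples: by the first paragraph, $\eval(\dA) = \eval(\dB)$ in $\FIM(\{a\})$ if and only if $\ell_A = \ell_B$, $r_A = r_B$ and $p_A = p_B$, which is a polynomial-time test. I do not expect a genuine obstacle here; the only points requiring care are verifying that the prefix-sum extrema stay within $O(|\dA|)$ bits, so that the dynamic program is truly polynomial, and checking that the recurrences for $\mathrm{hi}$ and $\mathrm{lo}$ correctly account for the empty prefix, since this is exactly what guarantees that the reconstructed interval $[\ell,r]$ contains the origin as required of a valid Munn tree.
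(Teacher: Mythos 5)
Your proof is correct and follows essentially the same route as the paper: the paper also represents an element of $\FIM(\{a\})$ by the triple (minimum of Munn interval, image in $\mathbb{Z}$, maximum of Munn interval) and uses exactly your concatenation recurrences, stated there as the multiplication rule $(i_1,j_1,k_1)\cdot(i_2,j_2,k_2) = (\min\{i_1, j_1+i_2\},\, j_1+j_2,\, \max\{k_1, j_1+k_2\})$, evaluated bottom-up over the SLP. Your additional remarks on the $O(|\dA|)$ bit-length of the stored integers only make explicit what the paper leaves implicit.
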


\begin{proof}
Note that the free group $\FG(\{a\})$ is isomorphic to $\mathbb{Z}$.
An element of $\FIM(\{a\})$ can be represented by a triple 
$(i,j,k) \in \mathbb{Z}^3$; where $i \leq j \leq k$, $i \leq 0 \leq k$.
This triple represents the element $x \in \FIM(\{a\})$, where
$\gamma(x) = j$ and the Munn tree
is $\{i, \ldots, k\} \subseteq \mathbb{Z}$.
Multiplication of these triples is defined as
$$
(i_1,j_1,k_1) \cdot (i_2,j_2,k_2) = (\min\{i_1, j_1+i_2\}, j_1+j_2, \max\{k_1,j_1+k_2\}) .
$$
From this rule, it is easy to compute in polynomial time for every variable $A$ of an SLP $\dA$
the $\mathbb{Z}$-triple that represents $\rho(\eval(A))$.
\qed
\end{proof}

\section{Compressed word problems for $\FIM(\Gamma)/P$}

For an inverse monoid of the form $\FIM(\Gamma)/P$,
where $\Gamma$ is finite and $P$ is a finite idempotent presentation, the 
word problem can be still solved in logspace \cite{LoOn06IC}.
In this case, the complexity of the compressed word problem reaches 
even $\PSPACE$:

\begin{theorem} \label{thm:PSPACE-idempot}
The following holds:
\begin{enumerate}[(a)]
\item For every  finite
idempotent presentation $P \subseteq (\Gamma \cup \Gamma^{-1})^* 
\times (\Gamma \cup \Gamma^{-1})^*$, the compressed word
problem for $\FIM(\Gamma)/P$ belongs to $\PSPACE$.
\item There exists a fixed finite
idempotent presentation $P \subseteq (\Gamma \cup \Gamma^{-1})^* 
\times (\Gamma \cup \Gamma^{-1})^*$ such that the compressed word
problem for $\FIM(\Gamma)/P$ is $\PSPACE$-complete.
\end{enumerate}
\end{theorem}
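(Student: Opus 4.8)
The plan for part~(a) is to compose the decompression map with the uncompressed word problem and then invoke Lemma~\ref{PSPACE}. First I would let $f$ be the mapping that sends a pair of SLPs $(\dA,\dB)$ to the pair of explicit words $(\eval(\dA),\eval(\dB))$, separated by a fresh symbol. By Remark~\ref{remark:SLP}, $\eval(\dA)$ can be produced from $\dA$ by a $\PSPACE$-transducer, so $f$ is computable by a $\PSPACE$-transducer. Next, let $L$ be the uncompressed word problem for $\FIM(\Gamma)/P$, i.e. the set of all pairs $(u,v)$ with $\tau_P(u)=\tau_P(v)$. By \cite{LoOn06IC} this $L$ is decidable in logspace, hence $L\in\POLYLOGSPACE$. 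Since the compressed word problem is exactly $f^{-1}(L)$, the second statement of Lemma~\ref{PSPACE} immediately yields membership in $\PSPACE$.

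For the lower bound in part~(b) I would exhibit a single idempotent presentation $P$ for which the compressed word problem is $\PSPACE$-hard, via a polynomial-time (in fact logspace) reduction from the acceptance problem of a \emph{fixed} polynomial-space Turing machine $M$ whose language is $\PSPACE$-complete; the fact that $M$ is fixed is precisely what lets its transition rules be absorbed into the fixed presentation $P$. The guiding intuition is that, by Theorem~\ref{wordproblem}, deciding $\eval(\dA)=\eval(\dB)$ in $\FIM(\Gamma)/P$ reduces to testing inclusions of the form $\MT(\eval(\dB))\subseteq\cl_P(\MT(\eval(\dA)))$, and the rewriting relation $\stackrel{*}{\Rightarrow}_P$ defining $\cl_P$ is a monotone reachability/propagation process on the Cayley-graph tree: a single step adds a translated copy of $\MT(f)$ once a translated copy of $\MT(e)$ is present, which is exactly the behaviour needed to push one configuration of $M$ to its successor.

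The key steps are as follows. Given an input $x$ for $M$, I would build in logspace SLPs $\dA$ and $\dB$ whose Munn trees lay out the computation tableau of $M$ on $x$: one coordinate of the tree (powers of $a$, as in the proof of Theorem~\ref{thm:cwp-pi2p}) indexes the cells within a single configuration, while successive configurations are placed at successive offsets, so that the whole tableau — of length exponential in $|x|$ — is encoded by a polynomial-size SLP using the same doubling/product construction from \cite{Loh06siam} already employed above. The fixed rules of $P$ would encode the local consistency of $M$'s transition function, so that $\cl_P$ propagates a correctly encoded configuration to its successor; $\eval(\dB)$ is then chosen to contain exactly one extra node marking an accepting configuration, so that the required inclusion holds if and only if that accepting node becomes reachable under $\cl_P$, i.e. if and only if $M$ accepts $x$. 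Finally I would verify $r(\eval(\dA))=r(\eval(\dB))$, so that by Theorem~\ref{wordproblem} the equation in $\FIM(\Gamma)/P$ reduces exactly to this reachability question.

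The hard part will be the joint design of $P$ and the tableau encoding so that the closure is \emph{faithful}. Since the rules of $P$ may fire at every translate where their left-hand pattern $\MT(e)$ occurs, I must engineer the encoding — with dedicated separator and marker symbols and a rigid grid geometry on the tree — to guarantee that no spurious derivation corrupts a neighbouring configuration and that a closure step is enabled exactly when the genuine transition rule of $M$ applies. A second constraint that shapes the whole argument is that the reduction must stay polynomial: the entire exponential tableau has to be manipulated only through SLPs, so the constructions of $\dA$ and $\dB$ must be carried out compositionally at the grammar level and never on the expanded words.
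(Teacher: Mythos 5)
Your argument for part~(a) is exactly the paper's: decompress with a $\PSPACE$-transducer (Remark~\ref{remark:SLP}), feed the result to the logspace algorithm for the uncompressed word problem from \cite{LoOn06IC}, and invoke Lemma~\ref{PSPACE}. That part is fine.

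Your plan for part~(b), however, has a genuine gap. A \emph{fixed} finite idempotent presentation $P$ consists of finitely many rules $(e,f)$ with Munn trees of bounded size over a bounded alphabet, so each closure step only inspects a bounded-radius pattern around some node and adds a bounded-size pattern there. In your tableau layout, cell $i$ of configuration $t$ and cell $i$ of configuration $t+1$ are separated in the tree by a distance of order $\mathrm{poly}(|x|)$ (and a tree cannot realize a genuine 2D grid in which they would be adjacent), so no fixed rule can relate them; encoding cell indices into the node labels instead would require an alphabet growing with $|x|$, contradicting fixedness of $P$. There is also a structural reason your route cannot succeed as stated: for fixed $P$ the closure process is essentially a finite tree-automaton computation (this is why the uncompressed word problem is in logspace), so it can only verify ``regular'' properties of the Munn tree. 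The $\PSPACE$-hardness therefore cannot be generated by the closure dynamics simulating a space-bounded machine; it has to come from the \emph{compression} itself. This is the key ingredient your proposal is missing: the paper reduces from the compressed membership problem $\eval(\dA)\otimes\eval(\dB)\in L$ for a fixed regular language $L$, whose $\PSPACE$-completeness is a nontrivial prior result from \cite{Loh11IC}. The fixed presentation then only needs to propagate a single DFA state (a bounded amount of information) along a path of the Munn tree that interleaves the letters of $\eval(\dA)$ and $\eval(\dB)$, plus a marker that is sent back to the root when a final state is reached --- all of which is achievable with bounded-size rules. Without importing a hardness result of this ``compressed but locally checkable'' kind, the reduction you sketch cannot be completed.
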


\begin{proof}
Let us first show (a).
In \cite{LoOn06IC}, it was shown that the ordinary word problem for $\FIM(\Gamma)/P$
can be solved in logarithmic space. Since $\eval(\dA)$ can be computed from 
$\dA$ by a $\PSPACE$-transducer (Remark~\ref{remark:SLP}), statement
(a) follows from Lemma~\ref{PSPACE}.

For the lower bound in (b), we use the following recent result from \cite{Loh11IC}: There
exists a fixed regular language $L$ over some paired alphabet $\Sigma \times
\Theta$ such that the following problem is
$\PSPACE$-complete (for strings $u \in \Sigma^*, v \in \Theta^*$ with $|u| = |v| =n$
let $u \otimes v = (u[1],v[1]) \cdots (u[n],v[n]) \in (\Sigma \times \Theta)^*$):

\medskip

\noindent
INPUT: SLPs $\dA$ (over $\Sigma$) and $\dB$ (over $\Theta$)
with $|\eval(\dA)| = |\eval(\dB)|$

\noindent
QUESTION: Does $\eval(\dA) \otimes \eval(\dB) \in L$ hold?

\medskip

\noindent
W.l.o.g. assume that $\Sigma \cap \Theta = \emptyset$.
Let $\mcA = (Q, \Sigma \times \Theta, \delta, q_0, F)$ be a
deterministic finite automaton with $L(\mcA)=L$.
Let $\Gamma = \Sigma \cup \Theta \cup Q \cup \{A,B,C\}$
(all unions are assumed to be disjoint).
Consider the fixed idempotent presentation over the alphabet $\Gamma$ with the following
relations:
\begin{center}
\setlength{\unitlength}{.9mm}
\begin{picture}(115,34)(0,18)
\gasset{Nadjust=n,ExtNL=y,NLdist=1,Nw=1,Nh=1,AHnb=1,Nfill=y,ELdist=0.5,AHLength=2,AHlength=2,linewidth=.3}
\node[Nw=2,Nh=2,Nfill=n](0)(0,40){}
\node(a)(0,30){}
\node(b)(0,50){}
\node(q)(5,50){}
\node(A)(10,40){}
\drawedge[ELside=r](0,a){$a$}
\drawedge(0,b){$b$}
\drawedge[ELside=r](0,q){$q$}
\drawedge[ELside=r](0,A){$A$}
\put(14,39){\Large =}
\node[Nw=2,Nh=2,Nfill=n](0')(22,40){}
\node(a')(22,30){}
\node(b')(22,50){}
\node(q')(27,50){}
\node(A')(32,40){}
\node(p)(37,50){}
\drawedge[ELside=r](0',a'){$a$}
\drawedge(0',b'){$b$}
\drawedge[ELside=r](0',q'){$q$}
\drawedge[ELside=r](0',A'){$A$}
\drawedge[ELside=r](A',p){$p$}
\put(40,39){if $\delta(q, (a,b)) = p$}

\node[Nw=2,Nh=2,Nfill=n](0'')(80,40){}
\node(B)(80,50){}
\node(f)(85,50){}
\drawedge(0'',B){$B$}
\drawedge[ELside=r](0'',f){$f$}
\put(87,39){\Large =}
\node[Nw=2,Nh=2,Nfill=n](0''')(95,40){}
\node(B')(95,50){}
\node(C)(95,30){}
\node(f')(100,50){}
\drawedge(0''',B'){$B$}
\drawedge[ELside=r](0''',C){$C$}
\drawedge[ELside=r](0''',f'){$f$}
\put(103,39){if $f \in F$}

\node[Nw=2,Nh=2,Nfill=n](1)(40,28){}
\node(A'')(50,28){}
\node(C')(50,18){}
\put(54,27){\Large =}
\node[Nw=2,Nh=2,Nfill=n](2)(62,28){}
\node(A''')(72,28){}
\node(C'')(72,18){}
\node(C''')(62,18){}
\drawedge(1,A''){$A$}
\drawedge[ELside=r](A'',C'){$C$}
\drawedge(2,A'''){$A$}
\drawedge(A''',C''){$C$}
\drawedge(2,C'''){$C$}
\end{picture}
\end{center} 
With the upper left relation, we simulate the automaton $\mcA$. The upper right 
relation allows to add a $C$-labeled edge as soon as a final state is reached;
the $B$-labeled edge acts as a kind of end marker for the input word. 
Finally, the last relation allows to propagate the $C$-labeled edge back to the 
origin (node $1$).

Assume that $\eval(\dA) = a_1 \cdots a_n$ and 
$\eval(\dB) = b_1 \cdots b_n$.
Consider the string 
$$
w = q_0 q_0^{-1} \prod_{i=1}^n (a_i a_i^{-1} A) BB^{-1} \prod_{i=0}^{n-1} (A^{-1} b_{n-i} b_{n-i}^{-1}). 
$$
It is easy to compute from $\dA$ and $\dB$ in polynomial time an SLP 
$\dC$ with $\eval(\dC) = w$.
The Munn tree $\MT(w)$ looks as follows:
\begin{center}
\setlength{\unitlength}{1.5mm}
\begin{picture}(60,20)(0,0)
\gasset{Nadjust=n,ExtNL=y,NLdist=1,Nw=.8,Nh=.8,AHnb=1,Nfill=y,ELdist=0.2,AHLength=1.7,AHlength=1.7,linewidth=.2}
\node[Nw=2,Nh=2,Nfill=n](0)(0,10){}
\node(a1)(0,0){}
\node(b1)(0,20){}
\node(q)(5,20){}
\node(1)(10,10){}
\node(a2)(10,0){}
\node(b2)(10,20){}
\node(2)(20,10){}
\node(a3)(20,0){}
\node(b3)(20,20){}
\node(3)(30,10){}
\node(a4)(30,0){}
\node(b4)(30,20){}
\put(33,10){$\ldots$}
\node(n-1)(40,10){}
\node(an-1)(40,0){}
\node(bn-1)(40,20){}
\node(n)(50,10){}
\node(an)(50,0){}
\node(bn)(50,20){}
\node(n+1)(60,10){}
\node(bn+1)(60,20){}
\drawedge[ELside=r](0,1){$A$}
\drawedge[ELside=r](1,2){$A$}
\drawedge[ELside=r](2,3){$A$}
\drawedge[ELside=r](n-1,n){$A$}
\drawedge[ELside=r](n,n+1){$A$}
\drawedge(0,a1){$a_1$}
\drawedge(0,b1){$b_1$}
\drawedge[ELside=r](0,q){$q_0$}
\drawedge(1,a2){$a_2$}
\drawedge[ELside=r](1,b2){$b_2$}
\drawedge(2,a3){$a_3$}
\drawedge[ELside=r](2,b3){$b_3$}
\drawedge(3,a4){$a_4$}
\drawedge[ELside=r](3,b4){$b_4$}
\drawedge(n-1,an-1){$a_{n-1}$}
\drawedge[ELside=r](n-1,bn-1){$b_{n-1}$}
\drawedge(n,an){$a_n$}
\drawedge[ELside=r](n,bn){$b_n$}
\drawedge[ELside=r](n+1,bn+1){$B$}
\end{picture}
\end{center}
We claim that $w = CC^{-1} w$ in $\FIM(\Gamma)/P$ if and only if 
$\eval(\dA) \otimes \eval(\dB) \in L(\mcA)$.
Clearly, $w = CC^{-1} w = 1$ in $\FG(\Gamma)$.
Moreover, $\cl_P(\MT(w)) = \cl_P(\MT(CC^{-1} w))$ if and only if 
$C \in \cl_P(\MT(w))$. Thus, it suffices to show that 
$C \in \cl_P(\MT(w))$ if and only if $\eval(\dA) \otimes \eval(\dB) \in L(\mcA)$.
First, assume that $\eval(\dA) \otimes \eval(\dB) \notin L(\mcA)$.
Let $q_i$ be the state of $\mcA$ after reading $(a_1,b_1) \cdots (a_i,b_i)$
($0 \leq i \leq n$). Thus, $q_n \not\in F$.
This implies that $\cl_P(\MT(w)) = \MT(w) \cup \{ A^i q_i \mid 0 \leq i \leq n\}$.
Hence, $C \not\in \cl_P(\MT(w))$. On the other hand, if $q_n \in F$, then
$\cl_P(\MT(w)) = \MT(w) \cup \{ A^i q_i, A^i C \mid 0 \leq i \leq n\}$
and therefore $C \in \cl_P(\MT(w))$.
\qed
\end{proof}

\section{Rational subset membership problems}

In this section we briefly outline our results 
on the compressed variant of the rational
subset membership problem for free inverse monoids.
We start with a lower bound.
Note that for $K \subseteq (\Gamma \cup \Gamma^{-1})^*$,
$\rho(K^*)$ is the submonoid of $\FIM(\Gamma)$ 
generated by $\rho(K)$. 

\begin{theorem} \label{PSPACE-hard submonoid}
There exists a fixed alphabet $\Gamma$ and a fixed
finite subset $K \subseteq (\Gamma \cup \Gamma^{-1})^*$
such that the following problem is $\PSPACE$-hard:

\noindent
INPUT: An SLP $\dA$ over the alphabet $\Gamma \cup \Gamma^{-1}$

\noindent
QUESTION: Does $\rho(\eval(\dA)) \in \rho(K^*)$ hold?
\end{theorem}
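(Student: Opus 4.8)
The plan is to reduce from the same $\PSPACE$-complete problem used in the proof of Thm.~\ref{thm:PSPACE-idempot}(b): fix a deterministic automaton $\mcA=(Q,\Sigma\times\Theta,\delta,q_0,F)$ recognizing the fixed regular language $L\subseteq(\Sigma\times\Theta)^*$, and, given SLPs $\dA_0$ (over $\Sigma$) and $\dB_0$ (over $\Theta$) with $|\eval(\dA_0)|=|\eval(\dB_0)|=n$, decide whether $\mcA$ accepts $\eval(\dA_0)\otimes\eval(\dB_0)$. I would first construct in polynomial time the SLP $\dA$ of the theorem over $\Gamma\cup\Gamma^{-1}$, where $\Gamma=\Sigma\cup\Theta\cup Q\cup\{A,B\}$ is fixed, so that $\MT(\eval(\dA))$ is the following \emph{scaffolding} (writing $\eval(\dA_0)=a_1\cdots a_n$, $\eval(\dB_0)=b_1\cdots b_n$): an $A$-labelled spine $1,A,\dots,A^n$; at every spine node $A^i$ a \emph{state star} consisting of the edges from $A^i$ to $A^iq$ for all $q\in Q$; at each node $A^{i-1}$ the two symbol hairs $a_i$ and $b_i$; and a $B$-marker at $A^n$; with distinguished endpoint $r(\eval(\dA))=A^n$. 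As in the proof of Thm.~\ref{thm:PSPACE-idempot}, this element is SLP-constructible by one forward pass driven by $\dA_0$ (drawing the stars, the $a_i$-hairs and the spine) interleaved with one backward pass driven by $\dB_0$ (drawing the $b_i$-hairs). The decisive point is that the state stars are uniform in $i$, so the scaffolding requires \emph{no} knowledge of the run of $\mcA$ and hence no (impossible) alignment of $\dA_0$ with $\dB_0$.

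The fixed generating set $K$ encodes $\mcA$ as Munn-tree ``tiles'' whose guarded composition simulates a left-to-right run, the current state $q$ at spine position $i$ being recorded by the walk position $A^iq$ inside the fixed tree. Concretely, $K$ contains: an initialisation tile with endpoint $q_0$ (drawing $1\to q_0$); for each transition $\delta(q,(a,b))=p$ a tile $\kappa_{q,a,b,p}$ whose Munn tree, read from its origin, steps back by $q^{-1}$ to the spine, there draws the whole state star together with the two hairs $a$ and $b$, and then advances by $A$ and by $p$, so that its endpoint is $q^{-1}Ap$; and for each $f\in F$ an accept tile stepping back by $f^{-1}$, drawing the state star and the $B$-marker, with endpoint $f^{-1}$. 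These tiles play exactly the role of the idempotent relations in the proof of Thm.~\ref{thm:PSPACE-idempot}(b). Membership $\rho(\eval(\dA))\in\rho(K^*)$ then asks for a walk from $1$ to $A^n$ whose stamped tiles cover $\MT(\eval(\dA))$ exactly. For the \emph{completeness} direction one simply stamps the initialisation tile, then the tiles $\kappa_{q_i,a_{i+1},b_{i+1},q_{i+1}}$ along the unique run $q_0,q_1,\dots,q_n$ of $\mcA$ on $(a_1,b_1)\cdots(a_n,b_n)$, and finally the accept tile for $q_n$; this sequence exists precisely when $q_n\in F$, covers the scaffolding exactly, and ends at $A^n$.

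The real work is \emph{soundness}: every factorisation of $\rho(\eval(\dA))$ over $K$ must be this forward simulation, forcing $q_n\in F$. The only available constraint is that each stamp must stay inside the fixed tree $\MT(\eval(\dA))$. A transition tile draws one $\Sigma$-hair and one $\Theta$-hair sharing a single base node, and the only nodes of $\MT(\eval(\dA))$ carrying both a $\Sigma$- and a $\Theta$-hair are the genuine spine nodes; hence any legally placed $\kappa_{q,a,b,p}$ sits at some state position $A^jq$, reads the true symbols $a=a_{j+1}$, $b=b_{j+1}$, and has endpoint $A^{j+1}p$. I would then argue by induction on the walk that the first stamp must be the initialisation tile (a transition or accept tile at the origin leaves the tree), that from a position $A^jq_j$ the next stamp is forced to be the transition tile whose first component is $q_j$ (any other label mislocates its hairs), and that the hairs at each node $0,\dots,n-1$ can be covered only in this way. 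Finally, the endpoint $A^n$ together with the fact that an accept tile fits only at $A^nq_n$—because the equation $q_jf^{-1}=A^{n-j}$ has no solution over the distinct generators involved when $j<n$—forces a single accept tile at the end with $f=q_n$, so $q_n\in F$. The delicate part, which I expect to be the main obstacle, is ruling out non-monotone or repeated stampings; this is handled by the same ``stays inside the tree'' analysis together with the rigidity of endpoints in the free group. Since $\Gamma$ and $K$ are fixed and $\dA$ is computable in polynomial time from $\dA_0,\dB_0$, this establishes $\PSPACE$-hardness already for the fixed finitely generated submonoid $\rho(K^*)$.
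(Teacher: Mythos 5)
Your reduction starts from the same $\PSPACE$-complete source problem as the paper's proof, builds essentially the same SLP-compressed scaffolding tree (an $A$-labelled spine with uniform state stars, the $a_i$-hairs laid down in a forward pass and the $b_i$-hairs in a backward pass, a $B$-marker at $A^n$, endpoint $A^n$), and correctly isolates the crucial point that the stars are uniform in $i$ so no alignment of the two SLPs is needed. The one genuine difference lies in the state-threading gadget. The paper keeps the walk on the spine: its tile for $\delta(q,(a,b))=p$ is $aa^{-1}bb^{-1}qq^{-1}A\prod_{s\in Q\setminus\{p\}}ss^{-1}$, so a tile reads the current state through its $qq^{-1}$ hair and announces the next state by \emph{omitting} the hair $p$ from the star it draws one spine node ahead; exactness of the coverage of the full star then forces the next tile's $q'q'^{-1}$ prefix to satisfy $q'=p$, and one gets the clean identity $\rho(w)\in\rho(K^*)$ iff $\rho(w)\in\rho(K_1)^{n-1}\rho(K_2)$. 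You instead record the state in the walk position itself: your tile's origin sits at the leaf $A^iq$, retreats by $q^{-1}$, draws the full star and both hairs, and ends at the leaf $A^{i+1}p$. Both mechanisms are sound; yours is arguably the more transparent one, since placement of the next tile is forced simply because $A^{i+1}p$ is a leaf whose only exit is $p^{-1}$, whereas the paper's missing-hair trick needs the exact-coverage argument but dispenses with a separate initialisation tile and with leaves as waypoints. One small repair to your soundness sketch: the reason an accept tile cannot sit at $A^jf$ for $j<n$ is that it would have to draw a $B$-hair at $A^j$, which your scaffolding provides only at $A^n$; your appeal to the unsolvability of $q_jf^{-1}=A^{n-j}$ in the free group addresses where the walk ends, not where the tile may be stamped.
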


\begin{proof}
The proof is very similar to the proof of statement (b) from
Thm.~\ref{thm:PSPACE-idempot}. In fact, we use a reduction
from the same $\PSPACE$-complete problem that we used there.
So, take two SLPs $\dA$ (over $\Sigma$) and $\dB$ (over $\Theta$)
with $|\eval(\dA)| = |\eval(\dB)|$. Again, let 
$\mcA = (Q, \Sigma \times \Theta, \delta, q_0, F)$ be a
deterministic finite automaton with $L(\mcA)=L$.
Let $\Gamma = \Sigma \cup \Theta \cup Q \cup \{A,B\}$
(all unions are assumed to be disjoint).
W.l.o.g. we can assume that final states of $\mcA$ do not have
outgoing transitions (to ensure this, one can introduce a copy
for each final state).
We choose $K = K_1 \cup K_2$, where:
\begin{eqnarray*}
K_1   & =  & \{ aa^{-1}bb^{-1} qq^{-1}A \!\!\! \prod_{s \in Q \setminus \{p\}}   \!\!\!\! ss^{-1}\mid a \in \Sigma, b \in \Theta, q,p \in Q \setminus F, p=\delta((a,b),q) \}   \\
K_2   & =  &   \{ aa^{-1}bb^{-1} qq^{-1}A B B^{-1}\mid a \in \Sigma, b \in \Theta, q \in Q \setminus F, p=\delta((a,b),q) \in F \}.
\end{eqnarray*}
The Munn trees for the elements in $\rho(K)$ look as follows
(note that these elements are not idempotent).
In the following, edges labeled with a subset
$P \subseteq Q$ represents $|P|$ many edges labeled with the symbols
of $P$, where all these edges have the same origin but the target nodes are different.
\begin{center}
\setlength{\unitlength}{1mm}
\begin{picture}(110,25)(0,27)
\gasset{Nadjust=n,ExtNL=y,NLdist=1,Nw=1,Nh=1,AHnb=1,Nfill=y,ELdist=0.5,AHLength=2,AHlength=2,linewidth=.3}
\node[Nw=2,Nh=2,Nfill=n](0)(0,40){}
\node(a)(0,30){}
\node(b)(0,50){}
\node(B)(5,50){}
\node(A)(10,40){}
 \drawline(10,40)(13.5,40)
\node(q)(15,50){}
\drawedge[ELside=r](0,a){$a$}
\drawedge(0,b){$b$}
\drawedge[ELside=r](0,B){$q$}
\drawedge[ELside=r](0,A){$A$}
\drawedge[ELside=r,ELpos=80](A,q){$Q \setminus \{p\}$}
\put(15,35){if $\delta(q, (a,b)) = p \in Q \setminus F$}
\node[Nw=2,Nh=2,Nfill=n](0')(65,40){}
\node(a')(65,30){}
\node(b')(65,50){}
\node(B')(70,50){}
\node(A')(75,40){}
\drawline(75,40)(78.5,40)
\node(q')(80,50){}
\drawedge[ELside=r](0',a'){$a$}
\drawedge(0',b'){$b$}
\drawedge[ELside=r](0',B'){$q$}
\drawedge[ELside=r](0',A'){$A$}
\drawedge[ELside=r,ELpos=50](A',q'){$B$}
\put(80,35){if $\delta(q, (a,b)) = p \in F$}
\end{picture}
\end{center} 
Assume that $\eval(\dA) = a_1 \cdots a_n$ and 
$\eval(\dB) = b_1 \cdots b_n$.
Consider the string 
$$
w = q_0q_0^{-1} a_1 a_1^{-1} \prod_{i=2}^n( A a_i a_i^{-1} \prod_{q \in Q} qq^{-1}) A BB^{-1} \prod_{i=0}^{n-1} (A^{-1} b_{n-i} b_{n-i}^{-1}) A^n. 
$$
It is easy to compute from $\dA$ and $\dB$ in polynomial time an SLP 
$\dC$ with $\eval(\dC) = w$.
The Munn tree $\MT(w)$ looks as follows:
\begin{center}
\setlength{\unitlength}{1.5mm}
\begin{picture}(70,20)(0,0)
\gasset{Nadjust=n,ExtNL=y,NLdist=1,Nw=.7,Nh=.7,AHnb=1,Nfill=y,ELdist=0.1,AHLength=1,AHlength=1,linewidth=.2}
\node[Nw=1.5,Nh=1.5,Nfill=n](0)(0,10){}
\node(a1)(0,0){}
\node(b1)(0,20){}
\node(q0)(5,20){}
\node(1)(10,10){}
\node(a2)(10,0){}
\node(b2)(10,20){}
\node(q1)(15,20){}
\node(2)(20,10){}
\node(a3)(20,0){}
\node(b3)(20,20){}
\node(q2)(25,20){}
\node(3)(30,10){}
\node(a4)(30,0){}
\node(b4)(30,20){}
\node(q3)(35,20){}
\put(36,10){$\ldots$}
\node(n-1)(45,10){}
\node(an-1)(45,0){}
\node(bn-1)(45,20){}
\node(qn-1)(50,20){}
\node(n)(55,10){}
\node(an)(55,0){}
\node(bn)(55,20){}
\node(qn)(60,20){}
\node(n+1)(65,10){}
\drawline(65,10)(67,10)
\node(qn+1)(70,20){}
\drawedge[ELside=r](0,1){$A$}
\drawedge[ELside=r](1,2){$A$}
\drawedge[ELside=r](2,3){$A$}
\drawedge[ELside=r](n-1,n){$A$}
\drawedge[ELside=r](n,n+1){$A$}
\drawedge(0,a1){$a_1$}
\drawedge(0,b1){$b_1$}
\drawedge[ELside=r](0,q0){$q_0$}
\drawedge(1,a2){$a_2$}
\drawedge[ELside=l](1,b2){$b_2$}
\drawedge[ELside=r](1,q1){$Q$}
\drawedge(2,a3){$a_3$}
\drawedge[ELside=l](2,b3){$b_3$}
\drawedge[ELside=r](2,q2){$Q$}
\drawedge(3,a4){$a_4$}
\drawedge[ELside=l](3,b4){$b_4$}
\drawedge[ELside=r](3,q3){$Q$}
\drawedge(n-1,an-1){$a_{n-1}$}
\drawedge[ELside=l](n-1,bn-1){$b_{n-1}$}
\drawedge[ELside=r](n-1,qn-1){$Q$}
\drawedge(n,an){$a_n$}
\drawedge[ELside=l](n,bn){$b_n$}
\drawedge[ELside=r](n,qn){$Q$}
\drawedge[ELside=r](n+1,qn+1){$B$}
\end{picture}
\end{center}
Note that $\rho(w) \in \rho(K^*)$ if and only if 
$\rho(w) \in \rho(K_1)^{n-1} \rho(K_2)$. From this 
observation, it follows easily that $\rho(w) \in \rho(K^*)$
if and only if 
$\eval(\dA) \otimes \eval(\dB) \in L(\mcA)$.
\qed
\end{proof}
Let us now turn to an upper bound.

\begin{theorem} \label{RatMP in PSPACE}
The following problem belongs to $\PSPACE$:

\noindent
INPUT: An SLP $\dA$ over an alphabet $\Gamma \cup \Gamma^{-1}$ and 
an NFA $\mcA$ over the
alphabet $\Gamma \cup \Gamma^{-1}$.

\noindent
QUESTION: Does $\rho(\dA) \in \rho(L(\mcA))$ hold?
\end{theorem}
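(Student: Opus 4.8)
The plan is to start from Munn's Theorem (Thm.~\ref{munn}). Write $u = \eval(\dA)$, let $T = \MT(u) \subseteq \IRR(\Gamma)$ be its Munn tree, and let $g = r(u)$ be its final node. Then $\rho(\eval(\dA)) \in \rho(L(\mcA))$ holds if and only if there is a word $w \in L(\mcA)$ with $r(w) = g$ and $\MT(w) = T$. Reading $w$ as a walk in the Cayley graph $\Cmc(\Gamma)$ starting at $1$, the conditions $\MT(w) \subseteq T$ and $T \subseteq \MT(w)$ say, respectively, that the walk never leaves $T$ and that it visits every node of $T$; since $T$ is a tree, the latter is equivalent to traversing every edge of $T$ (equivalently, to visiting every leaf). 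Thus the goal is to decide whether some $w \in L(\mcA)$ labels a walk from $1$ to $g$ that stays inside $T$ and covers all of $T$.

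First I would fix a compact addressing of the nodes of $T$. Every node has the form $r(u[1,i])$ for some $0 \le i \le |u|$, so it can be named by such a position $i$, which needs only polynomially many bits even though $|u|$ is exponential. Using Remark~\ref{remark:SLP} to build SLPs for prefixes $u[1,i]$ together with the polynomial-time compressed word problem for $\FG(\Gamma)$ (Thm.~\ref{CWP-F}), all basic navigation primitives become computable from such addresses: testing $r(u[1,i]) = r(u[1,j])$ (a free-group equality of prefixes), testing for a letter $a$ whether $r(u[1,i])\,a \in T$ and, if so, guessing and verifying a position $j$ with $r(u[1,j]) = r(u[1,i])\,a$, and locating $g = r(u[1,|u|])$. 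Consequently the \emph{reachability} part alone — does some $w \in L(\mcA)$ label a walk from $1$ to $g$ staying inside $T$, ignoring coverage — is just reachability from $(0,q_0)$ to some $(|u|,f)$, $f \in F$, in the product graph on vertices $(i,q)$ (a position together with a state of $\mcA$). This graph has exponentially many but only polynomially named vertices and polynomial-space-checkable edges, so that question lies in $\mathsf{NPSPACE} = \PSPACE$ by Savitch's theorem.

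The difficult part is to enforce coverage while staying in polynomial space, since the set of already-visited nodes is exponential and cannot be stored. My plan is to replace it by a polynomial-size \emph{coverage profile}. I would fix a canonical depth-first traversal of $T$, whose current position is again recorded by a node address plus bounded bookkeeping, and along it maintain a relation $R \subseteq Q \times Q$ recording exactly which pairs (state on first entering the subtree currently being processed, state on leaving it again) can be realized by a walk that fully covers that subtree without leaving $T$; such relations are polynomial because $\mcA$ is. Augmenting the product vertices by this profile and the traversal pointer yields a succinctly represented graph with polynomially named vertices and polynomial-space-checkable transitions, in which reaching a vertex certifying ``$T$ fully covered, currently at $g$, in a final state of $\mcA$'' is equivalent to the desired membership; reachability in it is once more in $\mathsf{NPSPACE} = \PSPACE$.

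The main obstacle, and the point needing the most care, is to make the profile combination faithful in polynomial space despite $T$ being exponential in both size and depth. The natural handle is the gluing identity $\MT(uv) = \MT(u) \cup r(u)\,\MT(v)$, which lets one drive the profile computation by a divide-and-conquer following the SLP: the derivation structure of $\dA$ has only polynomial depth (no cycles, polynomially many nonterminals), even though it unfolds to an exponential tree. The technical heart is that this concatenation-based decomposition does not align with the subtree-based decomposition wanted by the coverage profile, so one must reconcile them and, in particular, correctly account for the overlap of $\MT(u)$ with the shifted copy $r(u)\,\MT(v)$ near the gluing node $r(u)$. Controlling this overlap, and proving that the resulting profiles can be combined on demand within polynomial space, is where the real work of the upper bound lies.
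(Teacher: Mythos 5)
Your reformulation via Munn's theorem, the addressing of nodes of $T=\MT(\eval(\dA))$ by positions in $\eval(\dA)$ with free-group prefix comparisons, and the reachability-only part are all sound. The genuine gap is in the coverage profile. A walk that covers $T$ need not cover a subtree $T\rest_a$ in a single contiguous excursion: it may enter that subtree several times, each visit covering only part of it, and only the \emph{union} of the visits covers $T\rest_a$. A relation $R\subseteq Q\times Q$ of pairs ``realizable by one walk that fully covers the subtree'' is therefore the wrong invariant. Concretely, take $\Gamma=\{a,b,c\}$, $w=a\,a\,a^{-1}a^{-1}\,c\,c^{-1}\,a\,b\,b^{-1}a^{-1}$ and $L(\mcA)=\{w\}$: the instance is trivially positive, $T\rest_a=\{\varepsilon,a,b\}$ is covered by the union of the two excursions $a\,a\,a^{-1}a^{-1}$ and $a\,b\,b^{-1}a^{-1}$, but no single $\varepsilon$-loop in $T\rest_a$ that is consistent with $\mcA$ covers it, so your $R$ is empty and your algorithm rejects. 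What is needed is a \emph{multiset} over $Q\times Q$ recording how many excursions with each entry/exit state pair are used, plus a bound on the multiplicities; the paper gets this bound from a pumping argument ($N=n\cdot|\MT(u)|^2$, exponential in the input but of polynomial bit length, from \cite{DiLoMi08}) and takes as states of a tree automaton the pairs $(M,s)$ with $M\le N$ and $s$ a multiset over $Q\times Q$ of size at most $N$.

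The second issue is the part you yourself flag as ``where the real work lies'': combining profiles over a tree of exponential size \emph{and depth} in polynomial space. This is not a technicality to be reconciled later; it is exactly where the paper abandons the route you propose. Rather than running a space-bounded dynamic program or a reachability search on a succinct product graph, the paper writes out the \emph{uncompressed} Munn tree and the (exponentially large, but transition-wise polynomially describable) tree automaton $\mcB$ with a $\PSPACE$-transducer, and then invokes the fact that uniform membership for tree automata lies in $\LOGCFL\subseteq\POLYLOGSPACE$ \cite{Loh01rta}; by the composition principle of Lemma~\ref{PSPACE}, a $\POLYLOGSPACE$ algorithm run on the exponentially long output of a $\PSPACE$-transducer still gives a $\PSPACE$ bound. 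That composition is the device that absorbs the exponential depth of $T$ and the bottom-up combination of profiles; without it, and without the multiset states bounded by the pumping argument, your outline does not yet yield a $\PSPACE$ algorithm.
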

The proof of Thm.~\ref{RatMP in PSPACE} 
is based on tree automata techniques. Recall that 
a Munn tree $\MT(u)$ can be viewed as an edge labeled
tree. The node $\varepsilon$ can be made the root of the tree.
Such a rooted edge-labeled tree can be evaluated by a tree automaton.
Usually, tree automata work on node labeled trees, but this is only
a technicality, see the Appendix for precise definitions.
The proof of Thm.~\ref{RatMP in PSPACE} is based
on the following two Lemmas~\ref{constructing MT} and 
\ref{constructing TA}.

\begin{lemma}\label{constructing MT}
There is a $\PSPACE$-transducer, which computes
$\MT(\eval(\dA))$ for a given input SLP $\dA$.
\end{lemma}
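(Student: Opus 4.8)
The plan is to output $\MT(\eval(\dA))$ as the rooted edge-labelled tree obtained from the subtree of the Cayley graph $\Cmc(\Gamma)$ induced by the node set $\{ r(\eval(\dA)[1,i]) \mid 0 \le i \le n \}$, where $n = |\eval(\dA)|$ (computable in binary by Remark~\ref{remark:SLP}(a)), with root $\varepsilon$. I would emit it by a depth-first traversal in a bracketed format: at each node I write, for every present forward edge in a fixed order on $\Gamma \cup \Gamma^{-1}$, the edge label followed by the recursively written subtree in brackets. Since the output tape may be exponentially long this causes no difficulty; the whole problem is to run the traversal in polynomial working space, even though a single node is an irreducible word of length up to $n$ and the tree has up to $n+1$ nodes.

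The two subroutines I would build rest on the recursive decomposition of Munn trees coming from the multiplication rule $(U,v)(V,w) = (r(U \cup vV), r(vw))$, namely $\MT(xy) = \MT(x) \cup r(x)\,\MT(y)$ (where $r(x)\,\MT(y)$ denotes the left-translation by $r(x)$ in $\Cmc(\Gamma)$), applied along the productions of $\dA$. The first is a membership test: given a polynomial-size SLP for a reduced word $w$, decide whether $w \in \MT(\eval(\dA))$. Writing a production as $X \to \alpha$ and folding $\alpha$ from left to right, $w$ lies in $\MT(\eval(X))$ iff, for some split $\eval(X) = x y$ at a symbol boundary of $\alpha$, the reduced word $r(r(x)^{-1} w)$ lies in $\MT(\eval(Y))$ for the corresponding child $Y$ (terminals being the base case). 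This yields a recursion whose depth is bounded by the polynomial length of the longest path in the derivation DAG; each frame stores one query, which by Theorem~\ref{CWP-F} and Remark~\ref{remark:SLP}(b) can be kept as a polynomial-size SLP for a reduced word. Exploring the alternatives sequentially and backtracking keeps only a single root-to-leaf path in memory, so the test runs in polynomial space. The second subroutine computes the \emph{first-visit index} $\min\{ i \mid r(\eval(\dA)[1,i]) = w \}$ of a node $w \in \MT(\eval(\dA))$; it uses the same recursion but always descends into the left alternative first and adds the length $|x|$ when forced into a right child, thereby returning the least such $i$ in polynomial space as well.

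With these subroutines the traversal can be made \emph{stackless}, which is the crucial point for staying in $\PSPACE$: a naive recursive DFS would build a stack as deep as the exponential height of the tree. I would instead keep as the only traversal state the first-visit index $i$ of the current node $v$, a number with polynomially many bits; from $i$ I recover a polynomial-size SLP for $v = r(\eval(\dA)[1,i])$ via Remark~\ref{remark:SLP}(b) and the reduction of Theorem~\ref{CWP-F}. The parent of a non-root $v$ is $v$ with its last letter deleted (a prefix of its SLP), and that last letter records the edge by which $v$ was entered; the forward children of $v$ are those letters $a$, other than the inverse of $v$'s last letter, for which the membership test accepts the SLP for $r(v a)$, and the first-visit index of such a child is delivered by the second subroutine. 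Using these local computations I advance the traversal by the standard rule --- move to the next present forward edge, or, when none remains, move to the parent and resume its enumeration just after the entering edge --- emitting an opening bracket and a label on each descent and a closing bracket on each ascent. All stored data (the index $i$, the current SLPs, and the working space of the two subroutines) is polynomial, so the whole procedure is a $\PSPACE$-transducer.

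I expect the main obstacle to be precisely the space accounting just sketched: guaranteeing that every intermediate group element stays representable by a polynomial-size SLP under repeated left-multiplication-and-reduction, and that the traversal introduces no implicit exponential-depth recursion. Both are handled by never storing a node as an explicit word: nodes are addressed by their first-visit indices and re-expanded on demand through Remark~\ref{remark:SLP}(b), while all reductions of SLP-compressed products are carried out with the polynomial-time free-group machinery of Theorem~\ref{CWP-F}.
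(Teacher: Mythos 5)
Your construction is correct, but it is a much heavier route than the one the paper takes. The paper disposes of this lemma in two lines: by \cite{LoOn06IC} the map $u \mapsto \MT(u)$ on \emph{uncompressed} input is computable by a logspace transducer, the map $\dA \mapsto \eval(\dA)$ is computable by a $\PSPACE$-transducer (Remark~\ref{remark:SLP}), and Lemma~\ref{PSPACE} --- whose entire purpose is to license exactly this kind of composition --- yields the claim, since a logspace machine run on the virtually decompressed, exponentially long word $\eval(\dA)$ uses only $\log\bigl(2^{|\dA|^{O(1)}}\bigr)^{O(1)} = |\dA|^{O(1)}$ space and can re-request individual letters of $\eval(\dA)$ on demand. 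All the bookkeeping you do by hand --- addressing nodes by first-visit indices, the stackless DFS, re-expanding nodes from indices --- is what that composition lemma does generically. What your approach buys is self-containedness: it does not need the logspace Munn-tree transducer of \cite{LoOn06IC}, only the compressed free-group machinery. Two points in it need care. First, Theorem~\ref{CWP-F} as stated is only a decision procedure; you repeatedly need the stronger fact (which is indeed in \cite{Loh06siam}) that an SLP for the reduced normal form $r(u)$ can be computed in polynomial time from an SLP for $u$. Second, in the membership recursion you should not reduce the query afresh at each level of the derivation DAG, since iterating ``reduce, then recurse'' could compound a polynomial size increase over polynomially many levels; instead note that the accumulated left-translation at any point is $r(\eval(\dA)[1,i])^{-1}$ for a single index $i$, so every query is one reduction of one polynomial-size SLP --- consistent with, and best made explicit via, your index-based addressing.
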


\begin{proof}
For a given input word $u \in (\Gamma \cup \Gamma^{-1})^*$, the tree
$\MT(u)$ can be generated by a logspace transducer \cite{LoOn06IC}.
Moreover, the mapping $\dA \mapsto \eval(\dA)$ can be realized by a 
$\PSPACE$-transducer (Remark~\ref{remark:SLP}).
By Lemma~\ref{PSPACE}, we obtain a $\PSPACE$-transducer
realizing the mapping $\dA \mapsto \MT(\eval(\dA))$.
\qed
\end{proof}

\begin{lemma}\label{constructing TA}
There is a $\PSPACE$-transducer, which computes
from a given nondeterministic finite automaton $\mcA$ over the alphabet
$\Gamma \cup \Gamma^{-1}$ and a given SLP $\dA$  over the alphabet
$\Gamma \cup \Gamma^{-1}$ a 
nondeterministic tree automaton $\mcB = \mcB(\mcA,\dA)$  such that: 
$\rho(\eval(\dA)) \in \rho(L(\mcA))$ if and only if $\MT(\eval(\dA))$
is accepted by $\mcB$.
\end{lemma}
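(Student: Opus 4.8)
The plan is to construct the tree automaton $\mcB$ so that it recognizes exactly those Munn trees $T$ that encode a valid ``membership witness'': a word $w \in L(\mcA)$ whose path in the Cayley graph $\Cmc(\Gamma)$ lives inside $T$, starts at the root $\varepsilon$, and ends at the distinguished output node $r(\eval(\dA))$. Recall that $\rho(\eval(\dA)) \in \rho(L(\mcA))$ means there is some $w \in L(\mcA)$ with $\rho(w) = \rho(\eval(\dA))$, which by Munn's Theorem (Thm.~\ref{munn}) holds iff $r(w) = r(\eval(\dA))$ and $\MT(w) = \MT(\eval(\dA))$. But since we are checking membership rather than equality, the cleaner reformulation is: $\rho(\eval(\dA)) \in \rho(L(\mcA))$ iff some $w \in L(\mcA)$ has $r(w) = r(\eval(\dA))$ and $\MT(w) \subseteq \MT(\eval(\dA))$ --- intuitively, $w$ traces out a subtree of the target Munn tree, returning to visit nodes as needed, and terminates at the correct output vertex. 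First I would make this equivalence precise, checking that any such witnessing $w$ can be padded (by inserting back-and-forth traversals $aa^{-1}$) so that it actually visits \emph{all} of $\MT(\eval(\dA))$, thereby upgrading $\subseteq$ to $=$ and recovering genuine equality in $\FIM(\Gamma)$.

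Next I would encode the search for such a $w$ as a run of a tree automaton on $T = \MT(\eval(\dA))$. The key idea is that walking the word $w$ through the fixed tree $T$ is a finite-state process: the relevant information at any point is the current NFA-state of $\mcA$ together with the current node of $T$. A tree automaton cannot follow an arbitrary walk directly, but it can \emph{guess and verify} the set of ``traversal data'' consistent with a successful walk. Concretely, I would have $\mcB$ assign to each node $x$ of $T$ a subset of $Q \times Q$ (pairs of NFA-states), recording for each way of entering the subtree rooted at $x$ which NFA-states are reachable upon exiting back to the parent, given that the walk may descend into $x$'s subtree and come back. This is essentially a \emph{surface-pair} or \emph{two-way-to-one-way} summary: the closure of reachable state-pairs over the subtree can be computed bottom-up, combining the summaries of the children with the single-edge transitions of $\mcA$ read along the edges of $T$ (using the edge labels from $\Gamma \cup \Gamma^{-1}$). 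The automaton accepts iff, starting from $q_0$ at the root and using these subtree-summaries, some accepting NFA-state $f \in F$ can be reached precisely when the walk is positioned at the node $r(\eval(\dA))$.

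Having set up the state space, I would then verify the two correctness directions and the complexity bound. For soundness and completeness, the crux is the standard \emph{cross-section / summary} lemma for two-way automata on trees: a two-way walk of $\mcA$ over the fixed tree $T$ from root to the target node exists iff the bottom-up tree automaton accumulating state-pair reachability summaries has a successful run. This requires a careful but routine induction on subtree structure showing that the guessed summary at each node equals the true set of realizable $(q_{\mathrm{in}}, q_{\mathrm{out}})$ transit-pairs. For the $\PSPACE$-transducer claim, note that the number of states of $\mcB$ is at most $2^{|Q|^2}$ times a polynomial in the edge-label alphabet, hence $\mcB$ is of size at most exponential in $|\mcA|$ but its \emph{description} (transition table) can be written out deterministically in polynomial working space; combined with the fact that $\MT(\eval(\dA))$ itself is produced by a $\PSPACE$-transducer (Lemma~\ref{constructing MT}), the whole construction of $\mcB(\mcA,\dA)$ is realizable by a $\PSPACE$-transducer.

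\textbf{The main obstacle} I anticipate is handling the two-way nature of the walk correctly: because $w$ may traverse an edge of $T$ in either direction and revisit nodes arbitrarily often, the reachability summary must faithfully account for \emph{both} the $a$-labeled and $a^{-1}$-labeled crossings of each edge and for repeated descents into a subtree. Getting the orientation bookkeeping right --- so that the state-pair closure genuinely captures all two-way walks while remaining computable by a one-way bottom-up tree automaton whose state count stays within $2^{O(|Q|^2)}$ --- is the delicate part. The rest (encoding the output node $r(\eval(\dA))$ as a distinguished acceptance condition, and the $\PSPACE$ bookkeeping) is comparatively mechanical.
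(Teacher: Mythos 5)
Your first step --- replacing the condition $\MT(w) = \MT(\eval(\dA))$ by $\MT(w) \subseteq \MT(\eval(\dA))$ and then ``padding'' $w$ with factors $aa^{-1}$ to restore equality --- does not work, and the error breaks the whole construction. The witness $w$ must lie in $L(\mcA)$, which is an arbitrary regular language; inserting $aa^{-1}$ into $w$ in general produces a word outside $L(\mcA)$. Concretely, for $L(\mcA) = \{\varepsilon\}$ and $\eval(\dA) = aa^{-1}$ we have $r(\varepsilon) = r(aa^{-1})$ and $\MT(\varepsilon) \subseteq \MT(aa^{-1})$, yet $\rho(aa^{-1}) \neq 1 = \rho(\varepsilon)$, so $\rho(\eval(\dA)) \notin \rho(L(\mcA))$ even though your relaxed condition holds. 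By Munn's theorem the correct condition is that some word of $L(\mcA)$ labels a path in $\MT(\eval(\dA))$ from $\varepsilon$ to $r(\eval(\dA))$ that \emph{visits every node} of $\MT(\eval(\dA))$; this covering requirement is exactly what makes the uncompressed rational subset membership problem $\NP$-complete rather than polynomial, and it cannot be dropped.

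The flaw propagates to your automaton: a bottom-up summary assigning to each subtree a subset of $Q \times Q$ of realizable entry/exit state pairs decides only two-way \emph{reachability} through the tree; it carries no information about which nodes a walk actually visits, so it cannot enforce coverage (two walks realizing the same state pair may cover different node sets, and coverage must be coordinated across all subtrees simultaneously). The paper's construction is built precisely around this difficulty: it first bounds the length of a covering $\varepsilon$-loop by $N = |Q|\cdot|\MT(\eval(\dA))|^2$ via a pumping argument, and then uses tree-automaton states $(M,s)$ where $s$ is a \emph{multiset} over $Q \times Q$ counting how many loops with each state pair descend into the subtree and $M$ records their total length; the transition relation forces the domain of the child-assignment to equal the full set of children of the current node, which is how coverage is propagated through the induction. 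This state set is exponential in the size of $\dA$ (each state still has a polynomial-size description), which is why the output is produced by a $\PSPACE$-transducer rather than in polynomial time; your estimate of $2^{O(|Q|^2)}$ states independent of $\dA$ is another sign that the reachability-summary approach is solving a different, easier problem.
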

\medskip
\noindent
For the proof of Lemma~\ref{constructing TA} we need
some notations concerning multisets and tree automata.

\paragraph{\bf Multisets:}
A {\em multiset} over a set $A$ is a mapping $M : A \to \mathbb{N}$.
The {\em support} of $M$ is $\supp(M) = \{ a \in A \mid M(a) > 0\}$.
The {\em size} of $M$ is $|M| = \sum_{a \in A} M(a)$; we will only
consider multisets of finite size. 
For two multisets $M_1 : A \to \mathbb{N}$ and 
$M_2 : B \to \mathbb{N}$ we define the sum $M_1 + M_2$ as the 
following multiset over $A \cup B$: 
$$
(M_1+M_2)(x) = \begin{cases}
 M_1(x) & \text{ if } x \in A \setminus B \\
 M_2(x) & \text{ if } x \in B \setminus A \\
 M_1(x)+M_2(x)  & \text{ if } x \in A \cap B 
\end{cases}
$$
Addition of multisets is associative and commutative.
This allows us do define finite sums $\sum_{i \in I} M_i$
of multisets $M_i$, where $I$ is some finite set.
If $M$ is a multiset over $A$ and $f : A \to B$, then 
we define the multiset $f(M)$ over $B$ as follows:
$$
(f(M))(b) = \sum_{a \in f^{-1}(b)} M(a) .
$$
Clearly, $|M| = |f(M)|$.
We will consider multisets of words, i.e., multisets over
a set $\Sigma^*$. For a multiset 
$M$ over $\Sigma^*$ of finite size, we define the {\em total length}
$|\!| M |\!|$ of $M$ as
$|\!| M |\!| = \sum_{w \in \supp(M)} M(w) \cdot |w|$. 
Moreover, for a symbol $a \in \Sigma$ let
$|\!| M |\!|_a =  \sum_{w \in \supp(M)} M(w) \cdot |w|_a$
(where $|w|_a$ denotes the number of occurrences of 
the symbol $a$ in the word $w$).

\paragraph{\bf Tree automata:}
Let $\Theta$ be a finite alphabet.
A {\em $\Theta$-tree} is 
a finite and prefix-closed subset $T \subseteq \Theta^*$.
For $u \in T$ we define the tree 
$T \rest_u = \{ v \in \Theta^* \mid uv \in T\}$.
For $u \in T$ let us define $\out(u,T) = \{ a \in \Theta \mid ua \in T\}$.
A tree node $u \in T$ is a {\em leaf} of $T$ 
if $\out(u,T) = \emptyset$.
In the following, we will only consider
$(\Gamma \cup \Gamma^{-1})$-trees $T \subseteq \IRR(\Gamma)$
for a finite alphabet $\Gamma$.
Note that for a word $u \in (\Gamma \cup \Gamma^{-1})^*$,
the Munn tree $\MT(u) \subseteq \IRR(\Gamma)$ is such a $(\Gamma \cup \Gamma^{-1})$-tree.

A {\em tree automaton} (over the alphabet $\Theta$) is 
a triple $\mcB = (Q, \Delta, I)$, where
$Q$ is a finite set of states,
$\Delta \subseteq (\Theta \to_p Q) \times Q$
is the set of transitions,\footnote{$\Theta \to_p Q$
denotes the set of all partial mappings from $\Theta$ to $Q$.  
Our definition of a tree automaton is non-standard, but its suits
very well for our purpose.}  
and $I \subseteq Q$ is the set
of initial states. For a $\Theta$-tree
$T$, an {\em accepting run} of $\mcB$ on the tree 
$T$ is a mapping $\lambda : T \to Q$ such that:
\begin{itemize}
\item $\lambda(\varepsilon) \in I$
\item For every node $u \in T$, we have
$(f, \lambda(u)) \in \Delta$, where
$f$ is the partial mapping with
$\dom(f) = \out(u)$ and
$f(a) = \lambda(ua)$ for $a \in \out(u)$.
\end{itemize}
The tree language $L(\mcB)$ accepted by $\mcB$ is the 
set of all trees, for which there exists an accepting run.
For a state $q \in Q$ we let $L(\mcB,q) = L(Q,\Delta,\{q\})$, 
which is the language accepted by  the tree automaton 
that results from $\mcB$ by making $q$ the unique initial state.

\paragraph{\bf Loops in trees:}
We will consider loops in a $(\Gamma \cup \Gamma^{-1})$-tree $T\subseteq \IRR(\Gamma)$
that start and end in the root $\varepsilon$.  Such a loop can be identified with a word
over $\Gamma \cup \Gamma^{-1}$. Formally, an
{\em $\varepsilon$-loop} in $T$ is a word $\ell \in (\Gamma \cup \Gamma^{-1})^*$ such that
$r(\ell) = \varepsilon$ and $r(v) \in T$ for every prefix $v$ of $\ell$.
Let $\nodes(\ell) = \{ r(v) \mid v \preceq \ell \} \subseteq T$. 
This is the set of nodes that is obtained by starting in node $\varepsilon$ and 
walking along the unique $\ell$-labeled path.
Note that the empty word is an $\varepsilon$-loop.
We will be particularly interested in multisets over the sets of all $\varepsilon$-loops in $T$.
Let $\Lambda$ be such a multiset.
We say that $\Lambda$ covers $T$ if for every node $u \in T$
there exists an $\varepsilon$-loop $\ell \in \supp(\Lambda)$ such that
$u \in \nodes(\ell)$.

\medskip

\noindent
{\em  Proof of Lemma~\ref{constructing TA}.}
Let us fix the nondeterministic finite automaton $\mcA$ over the 
alphabet $\Gamma \cup \Gamma^{-1}$
and an SLP $\dA$ over the alphabet $\Gamma \cup \Gamma^{-1}$
for the rest of the proof. Let $u = \eval(\dA)$.
W.l.o.g. assume that $r(u) \neq \varepsilon$.
Moreover, we can assume that the last symbol $a_\ell$ of $u$
occurs only at the last position of $u$ (this can be enforced
by adding a new symbol to the alphabet $\Gamma$ which is appended to 
$u$). Note that $a_\ell$ is the last symbol of 
$r(u)$ as well.

By Thm.~\ref{munn}, $\rho(u) \in \rho(L(\mcA))$ if and only if 
there exists a path in the tree $\MT(u)$ from $\varepsilon$
to $r(u)$, which is labeled with a word from $L(\mcA)$, and 
which visits all nodes of $\MT(u)$.
Since $a_\ell$ occurs only at the last position of $u$,
the latter holds if and only if 
there exists a path in $\MT(u)$ from $\varepsilon$
to $\varepsilon$, which is labeled with a word from $L(\mcA) \cdot 
(\IRR(\Gamma) \cap  a_\ell^{-1}(\Gamma\cup\Gamma^{-1})^*)$, and 
which visits all nodes of $\MT(u)$.
A nondeterministic finite automaton $\mcA'$ for 
$L(\mcA) \cdot 
(\IRR(\Gamma) \cap  a_\ell^{-1}(\Gamma\cup\Gamma^{-1})^*)$
can be easily computed from $\mcA$. Hence, $\rho(u) \in \rho(L(\mcA))$
if and only if there exists an $\varepsilon$-loop $\ell$ in $\MT(u)$ 
with $\ell \in L(\mcA')$ and $\nodes(\ell) = \MT(u)$.
Let $\mcA' = (Q, \Gamma \cup \Gamma^{-1}, \delta, q_0, F)$.
A run of $\mcA'$ is a non-empty
word $r = (q_1, a_1, q_2) (q_2, a_2, q_3) \cdots (q_n, a_n, q_{n+1}) \in \delta^+$
of transition triples. Let $\lab(r) = a_1 a_2 \cdots a_n$. 
We also say that $r$ is a run from state $q_1$ to state $q_{n+1}$.
For states $p, q \in Q$ we denote with $L(\mcA',p,q)$ the set of all
words $\lab(r)$, where $r$ is a run from $p$ to $q$.
Moreover, for states $p,q \in Q$ and a $(\Gamma \cup \Gamma^{-1})$-tree
$T \subseteq \IRR(\Gamma)$, we denote
with $\lop(T,p,q)$ the set of all $\varepsilon$-loops $\ell$ in $T$ 
with $\ell \in L(\mcA',p,q)$.
Hence, we have to check whether for some $q \in F$ there exists $\ell \in \lop(\MT(u),q_0,q)$
with $\nodes(\ell) = T$.

We construct in $\PSPACE$ a tree automaton $\mcB$ that checks the latter property.
The idea of the construction can be explained as follows.
Let $n=|Q|$ be the number of states of $\mcA'$.
First of all, a pumping argument shows that 
if there exists a loop $\ell \in \lop(\MT(u),p,q)$
with $\nodes(\ell) = T$,  then
there exists a loop $\ell \in \lop(\MT(u),p,q)$ 
with $\nodes(\ell) = \MT(u)$ and 
$|\ell| \leq n \cdot |\MT(u)|^2$, see \cite[Proof of Theorem~3]{DiLoMi08}.
In the following, let $N = n \cdot |\MT(u)|^2$.
The set of states of $\mcB$ is the set of all
pairs $(M,s)$, where $0 \leq M \leq N$ and
$s$ is a multiset over $Q \times Q$ with $1 \leq |s| \leq N$.
Hence $\mcB$ contains at most
$(N+1)^{1+n^2}$ many states. The set of initial states
of $\mcB$ contains all states $(M,s_q)$ ($q \in F$) with
$M \leq N$ and   
$s_q(q_0,q) = 1$ and $s_q(p',q') = 0$ for all $(p',q') \in (Q \times Q)
\setminus \{(q_0,q)\}$.
The intuition behind a state $(M,s)$
is the following.  
Let $T \subseteq \IRR(\Gamma)$ be a $(\Gamma \cup \Gamma^{-1})$-tree.
We will have $T \in L(\mcB, (M,s))$ 
if and only if for all $p,q \in Q$
there exists a multiset $\Lambda_{p,q}$ over $\lop(T,p,q)$
such that the following holds, where $\Lambda = \sum_{p,q \in Q} \Lambda_{p,q}$:
\begin{itemize}
\item $|\Lambda_{p,q}| = s(p,q)$,
\item $|\!|\Lambda|\!| = M$,
\item $\Lambda$ covers $T$.
\end{itemize}
In other words: The tree
$T$ can be covered by 
$|s|$ many $\varepsilon$-loops  
of total length $M$,
where $s(p,q)$ many of these loops are labeled with 
a word from $L(\mcA',p,q)$.

It remains to construct the transition relation of $\mcB$.
For this, let us take 
a subset $\Omega \subseteq \Gamma \cup \Gamma^{-1}$,
let $(M,s)$ and $(M_a,s_a)$ ($a \in \Omega$)  be states of $\mcB$,
and let $f$ be the partial mapping with $f(a) = (M_a,s_a)$ for
$a \in \Omega$. We have to specify, whether the pair
$(f, (M,s))$ is a transition of $\mcB$. For this, the following definitions
are useful.

Let $\Delta$ be the set of all pairs $( (p,a,p'), (q,b,q') ) \in \delta \times \delta$
of transitions of $\mcA'$ such that $b = a^{-1}$.  
A non-empty word $w = (t_1,t'_1)(t_2,t'_2) \cdots (t_m,t'_m) \in
\Delta^+$ is {\em good}
if for all $1 \leq i < m$ the following holds: If 
$t'_i = (p,a,p')$ and $t_{i+1} = (q,b,q')$, then $p'=q$.
If $t_1 = (p,a,q)$ then we define $\first(w) = p$
and if $t'_m = (p,a,q)$ then we define $\last(w) = q$.
Moreover we denote with $\pi$ the projection
morphism from $\Delta^+$ to $(Q \times (\Gamma \cup \Gamma^{-1})
\times Q)^+$ with $\pi( (p,a,q), (p',a^{-1},q')) = (q,a,p')$.
The intuition behind good words is the following: Let $\ell$ be a non-empty
$\varepsilon$-loop in $T$ and fix a run $r$ of $\mcA'$ 
(the initial and final state of the run $r$ are not important)
with $\lab(r) = \ell$.
Then the $\varepsilon$-loop $\ell$ can factorized as
$\ell = \ell_1 \cdots \ell_m$, where each $\ell_i$ is a non-empty 
$\varepsilon$-loop which cannot be written as the concatenation of two non-empty
$\varepsilon$-loops. Thus, every $\ell_i$ is of the form $a \ell' a^{-1}$
for some $a \in \Gamma \cup \Gamma^{-1}$ and an $\varepsilon$-loop
$\ell'$ in $T\rest_a$. Hence, the run $r$ can be factorized as 
$r = r_1 \cdots r_m$, where $r_i$ is a run
of $\mcA'$ with $\lab(r_i) = \ell_i$ and $|r_i| \geq 2$.
Then, we obtain a good word 
\begin{equation}\label{def-g}
g(r) =  (t_1,t'_1) \cdots (t_m,t'_m),
\end{equation}
where the transition $t_i$ (resp. $t'_i$) is the first         
(resp. last) transition of the subrun $r_i$.
Moreover, for all $q,p \in Q$ and $a \in \Gamma \cup \Gamma^{-1}$, we 
define a multiset $\Lambda^a_{r,q,p}$ over $\lop(T\rest_a, q,p)$
as follows: $\Lambda^a_{r,q,p}(\ell')$ equals the number of indices $1 \leq i \leq m$
such that $\ell_i = a \ell' a^{-1}$, $t_i = (q', a, q)$ for some state $q'$, and 
$t'_i = (p,a^{-1},p')$ for some state $p'$.

Now, $(f, (M,s))$ is  a transition of $\mcB$ if and only if 
for all  $p,q \in Q$
there exist  multisets $W_{p,q} \subseteq \Delta^+$
of good words with the following properties, where $W = \sum_{p,q \in Q} W_{p,q}$:
\begin{itemize}
\item $|W_{p,q}| \leq s(p,q)$ and 
$|W_{p,q}| = s(p,q)$ in case $p \neq q$.
\item $\first(w) = p$ and $\last(w) = q$ for all $w \in \supp(W_{p,q})$.
\item $\pi(w) \in (Q \times \Omega \times Q)^+$ for every $w \in \supp(W)$.
\item $\sum_{a \in \Omega} M_a + 2 \cdot |\!|W|\!| = M$.
\item For all $a \in \Omega$, $q',p' \in Q$,
$s_a(q',p') = |\!|\pi(W)|\!|_{(q',a,p')}$. 
\end{itemize}
We claim that $\MT(u) \in L(\mcB)$ if and only if  
there exists an $\varepsilon$-loop $\ell \in L(\mcA')$ in $\MT(u)$ 
with $|\ell| \leq N$ and $\nodes(\ell) = \MT(u)$.
By the definition of the set of initial states of $\mcB$,
it suffices to prove the following more general claim:

\medskip
\noindent
{\em Claim:} Let $(M,s)$ be a state of $\mcB$
and let $T \subseteq \IRR(\Gamma)$ be a $(\Gamma \cup \Gamma^{-1})$-tree.
Then $T \in L(\mcB,(M,s))$  if and only if 
for all $p,q \in Q$
there exists a multiset $\Lambda_{p,q}$ over $\lop(T,p,q)$
such that the following holds, where $\Lambda = \sum_{p,q \in Q} \Lambda_{p,q}$:
\begin{itemize}
\item $|\Lambda_{p,q}| = s(p,q)$,
\item $|\!|\Lambda|\!| = M$,
\item $\Lambda$ covers $T$.
\end{itemize}
Both directions are shown by induction over the height of $T$.
First, assume that there exist  multisets $\Lambda_{p,q}$ over $\lop(T,p,q)$
such that the following holds, where $\Lambda = \sum_{p,q \in Q} \Lambda_{p,q}$:
\begin{itemize}
\item $|\Lambda_{p,q}| = s(p,q)$,
\item $|\!|\Lambda|\!| = M$,
\item $\Lambda$ covers $T$.
\end{itemize}
Let $\Lambda'_{p,q}$ be the multiset of all non-empty loops
in $\Lambda_{p,q}$. Formally, we set 
$\Lambda'_{p,q}(\varepsilon) = 0$ and
$\Lambda'_{p,q}(\ell) = \Lambda_{p,q}(\ell)$ if 
$\ell \neq \varepsilon$. Let $\Lambda' = \sum_{p,q \in Q} \Lambda'_{p,q}$.
We have $|\Lambda'_{p,q}| \leq s(p,q)$ and $|\Lambda'_{p,q}| = s(p,q)$
if $p \neq q$. 

We now define several multisets.
First of all, for all $p,q \in Q$ we can choose a multiset $R_{p,q}$
of runs of $\mcA'$ from $p$ to $q$ 
such that $\lab(R_{p,q}) = \Lambda'_{p,q}$.
Hence, $|R_{p,q}| = |\Lambda'_{p,q}|$.
Intuitively, we choose for every loop $\ell$ in $\Lambda'_{p,q}$ a run of $\mcA'$
from $p$ to $q$ with label $\ell$, where different runs may be chosen
for different occurrences of the same loop $\ell$ in the 
multiset $ \Lambda'_{p,q}$.  
Moreover, define the multiset $W_{p,q}$ over
$\Delta^+$ as $W_{p,q} = g(R_{p,q})$, where the mapping $g$
is defined in \eqref{def-g}.
Hence, $|R_{p,q}| = |W_{p,q}| = |\Lambda'_{p,q}|$.
Let $W = \sum_{p,q \in Q} W_{p,q}$ and $R = \sum_{p,q \in Q} R_{p,q}$.
Let $\Omega$ be the set of all symbols $a \in \Gamma\cup\Gamma^{-1}$
for which there exists $w \in \supp(W)$ such that $w$ contains a transition
pair of the form $( (p,a,q), (p',a^{-1},q') ) \in \Delta$.
Since  $\Lambda$ covers $T$, we must have $\Omega=\out(\varepsilon,T)$.
So far, we obtain the following properties for all $p,q \in Q$:
\begin{gather}
\text{$|W_{p,q}| \leq s(p,q)$ and 
$|W_{p,q}| = s(p,q)$ in case $p \neq q$} \label{cond-trans1}\\
\text{$\first(w) = p$ and $\last(w) = q$ for all $w \in \supp(W_{p,q})$} \label{cond-trans2} \\
\text{$\pi(w) \in (Q \times \Omega \times Q)^+$ for every $w \in \supp(W)$} \label{cond-trans3}
\end{gather}
Finally, for all $a \in \Omega$, $q,p \in Q$, 
we define the multiset $\Lambda^a_{q,p}$ over 
$\lop(T\rest_a, q,p)$ as follows:
$$
\Lambda^a_{q,p}(\ell) = \sum_{r \in \supp(R)} R(r) \cdot \Lambda^a_{r,q,p}(\ell).
$$
Let $\Lambda^a = \sum_{q,p \in Q} \Lambda^a_{q,p}$.
Since $\Lambda$ covers $T$, we get:
\begin{equation} \label{ind1}
\text{$\Lambda^a$ covers  $T\rest_a$ for all $a \in \Omega$.}
\end{equation}
We now define for every $a \in \Omega$ a state $(M_a, s_a)$ of the 
tree automaton $\mcB$ as follows:
For all $q,p \in Q$, let
\begin{equation} \label{cond-trans5}
s_a(q,p) = |\!|\pi(W)|\!|_{(q,a,p)}.
\end{equation}
This easily implies
\begin{equation}  \label{ind2} 
s_a(q,p) = |\Lambda^a_{q,p}| .
\end{equation}
Moreover, for all $a \in \Omega$, we define
\begin{equation}  \label{ind3}
M_a = |\!|\Lambda^a|\!| .
\end{equation}
This implies
\begin{equation} \label{cond-trans4}
M = \sum_{a\in \Omega} M_a + 2 \cdot |\!|W|\!|.
\end{equation}
Here, the factor two comes from the fact that each symbol in 
a word from $W$ is a pair of transitions.
Hence, using  \eqref{ind1}, \eqref{ind2}, \eqref{ind3},
and induction over the tree height, we get $T\rest_a \in L(\mcB,(M_a,s_a))$.
Moreover, by \eqref{cond-trans1}, \eqref{cond-trans2}, \eqref{cond-trans3}, \eqref{cond-trans5}, and
\eqref{cond-trans4}, the pair $(f,(M,s))$, where $f$ is the partial mapping with
$\dom(f) = \Omega$ and $f(a) = (M_a,s_a)$ for $a \in \Omega$,
is a transition of $\mcB$. Hence, we have indeed $T \in L(\mcB,(M,s))$.

The other direction of the above claim can be shown similarly.
It remains to argue that $\mcB$ can be computed by a $\PSPACE$-transducer
with input $\mcA, \dA$.
The automaton $\mcA'$ can be computed in polynomial time from $\mcA$.
Since $\MT(\eval(\dA))$ can be computed in $\PSPACE$ from 
$\dA$ by Lemma~\ref{constructing MT}, we can compute the binary representation of the number
$N$ in $\PSPACE$ as well. Note that the number $N$ is exponentially
bounded in the size of the input $\mcA, \dA$.
Hence, every transition of $\mcB$ can be described with polynomially
many bits. It suffices to show that one can check in $\PSPACE$,
whether a given pair $(f,(M,s))$ is indeed a transition of $\mcB$.
This follows easily from the definition of the transitions of $\mcB$.
One can guess the multisets $W_{p,q}$ ($p,q \in Q$), but instead of storing
these sets one only stores the current transition pair and thereby
accumulates the values $|W_{p,q}|$, $|\!|W|\!|$, and $|\!|W|\!|_{(q',a,p')}$ 
for all $a \in \Omega$ and $q',p' \in Q$.
\qed

\medskip

\noindent
{\em Proof of Thm.~\ref{RatMP in PSPACE}.}
We apply Lemma~\ref{PSPACE}, where
$f : (\dA, \mcA) \mapsto (\MT(\eval(\dA)), \mcB(\mcA,\dA))$ and
$L$ is the uniform membership problem for tree
automata, i.e., the set of all pairs $(T,\mcB)$, where
$T$ is a tree and $\mcB$ is a tree automaton that accepts $T$.
By \cite{Loh01rta}, $L$ belongs to  $\LOGCFL$
and hence to $\POLYLOGSPACE$ \cite{LeStHa65}. 
Moreover, the mapping $f$ can be computed by a $\PSPACE$-transducer by Lemma~\ref{constructing MT}
and \ref{constructing TA}.
\qed

\def\cprime{$'$}

\end{document}